      \newtheorem{example}{Example}
      \newtheorem{theorem}{Theorem}
      \newtheorem{corollary}{Corollary}
      \newtheorem{lemma}{Lemma}
      \newtheorem{proposition}{Proposition}
      \newtheorem{assumptions}{Assumption}
      \def\R{{\mathbb R}}
      \def\N{{\mathbb N}}
      \def\C{{\mathbb C}}
      \def\D{{\mathbb D}}
      \def\Mnd{M_n(\mathbb C^d)}
      \def\cN{\mathcal C}
      \def\cFd{\mathcal F_d}
      \def\cFdd'{\mathcal F_{dd'}}
      \def\Cg{\mathbb C\langle g\rangle}
      \def\cU{\mathcal U}
      \def\cAKn{\mathcal A(\mathcal  K)}
      \def\cAK{\cAKn^\infty}
      \def\cIK{\mathcal I(\mathcal  K)}
      \def\cK{\mathcal K}
      \def\cM{\mathcal M}
      \def\cH{\mathcal H}
      \def\cV{\mathcal V}
      \def\cW{\mathcal W}
      \def\cA{\mathcal A}
      \def\cD{\mathcal D}
\begin{document}
\title{Non-Commutative Carath\'{e}odory Interpolation}

\author[Balasubramanian]{Sriram Balasubramanian}
\address{Department of Mathematics\\
  Univeristy of Florida }
\email{bsriram@ufl.edu}

\subjclass[2000]{47A57, 47L30 (Primary).
47A13 (Secondary)}

\keywords{Interpolation, Carath\'{e}odory, Carath\'{e}odory-Fej\'{e}r, Abstract operator algebra,
BRS, Matrix convex set, Formal power series}

\maketitle

\begin{abstract}
We prove a Carath\'{e}odory-Fej\'{e}r
 type interpolation theorem for certain
matrix convex sets in $\C^d$ using the
Blecher-Ruan-Sinclair characterization of abstract
operator algebras.  Our results generalize the work of
Dmitry S. Kalyuzhny\u{i}-Verbovetzki\u{i}
for the d-dimensional non-commutative polydisc.
\end{abstract}

\thispagestyle{empty}

\section{Introduction}
A classical interpolation problem in function theory is the
Carath\'{e}odory-Fej\'{e}r interpolation
problem (CFP):
Given $n+1$ complex numbers $c_0, c_1, ..., c_n$ does there exist
a complex
valued analytic function $f(z) = \sum_{j=0}^{\infty}
f_j z^j$
 defined on the open unit disc $\D \subset \C$
 such that $f_j = c_j$ for all $0 \leq j \leq n$ and
 $|f(z)| \leq 1$ for all $z \in \D$?

The problem and some of its variants were studied by Carath$\acute{\text{e}}$odory,
Fej$\acute{\text{e}}$r, Schur and Toeplitz in \cite{Sc}, \cite{T} and
\cite{CF}.
 A necessary and sufficient condition, commonly referred to as
 the Schur criterion, for the existence of a solution
 to the problem is that the matrix
 \begin{equation}
 \label{eq:matrix}
 \begin{pmatrix} c_{0} & 0 & \cdots & 0 \\ c_{1} & \ddots
 & \ddots & \vdots \\ \vdots & \ddots & \ddots & 0 \\
 c_{n} & \cdots & c_1 & c_{0} \end{pmatrix}
 \end{equation}
 is a contraction. A detailed exposition of the CFP
 and its numerous function-theoretic and engineering
 applications can be found in the comprehensive
 book of Foias and Frazho \cite{FF}.  The operator
 theoretic formulation of Sarason \cite{S} has had
 a major impact on the CFP and the related Pick interpolation
 problem and the development of opeator theory and
 the study of non-self adjoint operator algebras  generally.

 From the operator theory/algebra point of view, the CFP
 is essentially unchanged if the coefficients
 $c_0, c_1, ..., c_n$ are taken to be elements of $B(\cU)$ for some separable
 Hilbert space $\cU$. Indeed,
The Schur criterion in this case can be formulated
in the following way.
Let $p(z) = \sum_{j=0}^n c_jz^j : \D \to B(\cU)$ be given.
There exists an analytic function $f:\mathbb D\to B(\cU)$
such that $f^{(j)}(0)=c_j$ for $0\le j\le n$ if and only if
the norm of $p(T)=\sum_{j=0}^n c_j\otimes T^j$
is at most one for every operator $T$ on Hilbert space
which is nilpotent of order $n+1$;  i.e., $T^{n+1}=0$.

Several commutative multi-variable generalizations of the
CFP  have
been obtained for different domains for example,
the polydisc $\D^d \subset \C^d$,
and for different interpolating classes of functions,
for example the Schur-Agler class of analytic
functions that take contractive operator values on
any d-tuple of commuting strict contractions in a manner
discussed in \cite{A}. For more details see \cite{EPP}, \cite{BLTT}.
Some results on the problem for bounded circular domains
in $\C^d$ can also be found in \cite{D}.

The broad purpose of this article is to extend some of the results in
the commutative case to the noncommutative setting of the free algebra
on $d$ generators. Some non-commutative
generalizations of the CFP have
already been studied in \cite{P2}, \cite{P3}, \cite{Co}, \cite{KV},
\cite{BGM}. Here we pose the problem
for domains that are matrix convex sets (see \cite{EW})
 in $\C^d$. The domains considered here include as specific examples,
the $d$-dimensional non-commutative matrix polydisc, which is the domain
used in \cite{KV}, the $d$-dimensional non-commutative
matrix polyball and the $dd'$-dimensional non-commutative matrix mixed ball.
Using non-commutative matrix (operator) valued analytic functions
 on matrix-convex sets -  formal power series
with matrix (operator) coefficients that converge on some non-commutative
neighborhood of 0 (see \cite{V1}, \cite{V2}, \cite{V3}, \cite{P1}, \cite{P2},
 \cite{P3}, \cite{KVV}, \cite{HKMS}) -  and
the Blecher-Ruan-Sinclair characterization of
abstract operator algebras, we prove an interpolation theorem
from which a necessary and sufficient
condition for the existence of a minimum-norm solution to the
CFP follows.

The article is structured as follows: In Section \ref{sec:mconvex},
matrix convex
sets in $\C^d$ and the interpolating class $\cAK$ are introduced and
a basic version of the main result is stated. In Section
 \ref{sec:opalgcak}, it is
established that the interpolating class $\cAK$ is an abstract
operator algebra. In Section \ref{sec:weak}, a weak-compactness property
of the algebra $\cAK$ is proved. Section \ref{sec:quotient}
 begins with the
definition of the ideal $\cIK \subset \cAK$ which
plays a role analogous to that of the ideal of analytic functions
in $H^\infty$ of
the unit disc which vanish to order $n$ at $0$
in the classical CFP. This section also contains the
discussion of why $\cAK/\cIK$ is an abstract operator algebra.
It is also shown in this section that the norms of classses
in the quotient algebra are attained. In Section
\ref{sec:reps}, completely contractive representations of the algebra $\cAK$
are studied and it is also shown that finite-dimensional
compressions of operators
that give rise to completely contactive representations of $\cAK$
 lie on the boundary of the
underlying matrix convex set. Section \ref{sec:opversion}
 contains the matrix and
operator versions of the CFP for (certain) matrix-convex sets in $\C^d$
and the main interpolation theorem. The article ends with some examples
for which a variant of the main interpolation theorem from Section
 \ref{sec:opversion}
holds even in the case of infinite initial segments $\Lambda$.
See Section \ref{sec:further}.

\section{Matrix Convex Sets in $\mathbb C^d$}
 \label{sec:mconvex}
A basic object of study in this article is a {\it quantized}, or
{\it non-commutative}, \index{non-commutative set}\index{quantized} version of
a convex set.  While the definitions easily extend
to convex subsets of arbitrary vector spaces, here
the focus is on subsets  of $\mathbb C^d$.
In this section we review the definition of a matrix convex subset
of $\mathbb C^d$ and introduce our standard assumptions regarding these
sets.

\subsection{Non-commutative sets}
Let $M_{m,n}=M_{m,n}(\mathbb C)$ denote the $m\times n$ matrices over
$\mathbb C$.  In the case that $m=n$, we write $M_n$ instead of
$M_{n,n}$.  Let $M_n(\mathbb C^d)$ denote $d$-tuples with entries from
$M_n$. Thus, an $X\in \Mnd$ has the form $X=(X_1,\dots,X_d)$
where each $X_j\in M_n$.   A {\it non-commutative set}
\index{non-commutative set} $\mathcal L$
is a sequence $(\mathcal L(n))$ where, for each positive
integer $n$,  $\mathcal L(n) \subset \Mnd$ which is
{\it closed with respect to direct sums}; i.e., if
$X\in \mathcal L(n)$ and  $Y\in\mathcal L(m)$, then
\begin{equation}
 \label{eq:direct-sum}
   X\oplus Y = (X_1\oplus Y_1,\dots, X_d\oplus Y_d)
    \in \mathcal L(n+m)
\end{equation}
 where
\[
  X_j\oplus Y_j =  \begin{pmatrix} X_j & 0 \\ 0 & Y_j \end{pmatrix}.
\]

  A non-commutative set $\mathcal L=(\mathcal L(n))$ is
  {\it open} if each $\mathcal L(n)$ is open.
  \index{open, non-commutative set}

\subsection{Convexity}
A {\it matrix convex set $\cK=(\cK(n))$}
\index{matrix convex set} is a non-commutative set which is
{\it closed with respect to conjugation by an isometry};
\index{closed with respect to conjugation} i.e.,
if  $\alpha \in M_{m,n}$ and  $\alpha^* \alpha = I_n$, and if
    $X=(X_1,\dots,X_d)\in \cK(m),$ then
\begin{equation}
 \label{eq:conjugate}
    \alpha^* X\alpha =(\alpha^* X_1\alpha,\dots,\alpha^*X_d\alpha)
       \in \cK(n).
\end{equation}
  Note that, by choosing $n=m$ and $\alpha$ a unitary matrix,
  the condition \eqref{eq:conjugate} implies that each $\mathcal L(n)$
  is closed with respect to unitary conjugation.

  It is a simple matter to combine
  conditions \eqref{eq:direct-sum} and \eqref{eq:conjugate}
  to conclude, if $\mathcal L$ is matrix convex,
  then each $\mathcal L(n)$ is itself convex.

 \subsection{Circled domains}
 A subset  $\cU$ of  $\Mnd$ is {\it circled}  \index{circled}
 if $e^{i \theta} \cU \subseteq \cU$ for all $\theta \in \R$.
 A matrix convex set $\cK$ is circled if each $\cK(n)$ is
 circled.  As a canonical example of a circled matrix convex set,
 suppose $\gamma>0$ and consider the non-commutative
 $\gamma$-neighborhood  $N_\gamma = (\cN_\gamma(n))$ of
 $0 \in \mathbb C^d$ defined by
\[
  \cN_\gamma(n)=\{X\in\Mnd: \displaystyle \sum_{j=1}^d X_jX_j^* < \gamma^2 \}.
\]
 For a matrix convex set $\cK,$ unless otherwise noted,
 it is assumed there exist $\gamma, \Gamma >0$ such that
\begin{equation}
 \label{eq:above-below}
   N_\gamma \subseteq \cK \subseteq N_\Gamma,
\end{equation}
 where the inclusions are interpreted termwise.
 Equivalently, $\cK$ is bounded (contained in some
 non-commutative neighborhood of $0$) and contains a
 non-commutative neighborhood of $0$.

\begin{assumptions}
 \label{assume}
  In this article,  it is typically assumed that $\cK$
\begin{itemize}
 \item[(a)] is open;
 \item[(b)] is bounded;
 \item[(c)] is circled;
 \item[(d)] is matrix convex; and
 \item[(e)] contains a non-commutative neighborhood of $0.$
\end{itemize}
\end{assumptions}

\subsection{Examples of Matrix Convex Sets}
\label{subsec:examples}
 At this point we pause to consider some further examples
 of open, bounded matrix convex sets which contain a
 non-commutative neighborhood of $0$.

\begin{example}
\label{eg:ncmatrixpolydisc}
 Let $\cK(n) = \left\{(X_1, X_2, ..., X_d) \, :
 \, X_j \in M_n \,\, \& \,\, \|X_j\| < 1 \right\}$
with $\gamma < \frac{1}{\sqrt{d}}$ and $\Gamma > \sqrt{d}$.
$\cK= (\cK(n))$ is the d-dimensional noncommutative matrix polydisc.
\end{example}

\begin{example}
\label{eg:ncmatrixmixedball}
Let $\cK(n) = \{X = (X_{1,1}, X_{1,2}, ..., X_{d,d'}) \, : \,
 X_{i,j} \in M_n\,\, \& \,\, \| X \|_{op} < 1 \}$, where
$\|X\|_{op}$ is the norm of the operator $(X_{ij})_{i, j = 1}^{d, d'}
: (\C^n)^{d'} \to (\C^n)^d$ with $\gamma < \frac{1}{\sqrt{dd'}}$
and $\Gamma > \sqrt{dd'}$ is the $d\times d'$ non-commutative
matrix mixed ball.
\end{example}

\subsection{The Interpolating class $\cAK$}
  Let $\mathcal K$ denote a matrix convex set satisfying
  the conditions of Assumption {\ref{assume}}.
  A central object of this article is an algebra of
  formal power series in non-commuting variables which
  converge uniformly on the matrix convex set $\mathcal K.$
  These power series are defined in terms of the
  free semi-group on $d$ letters.

\subsubsection{The Free Semi-group on $d$ Letters}
 Let $\cFd$ denote the free semigroup of words generated by $d$
 symbols ${g_{1},..., g_{d}}$.  The product on $\cFd$ is
 defined by concatenation.
 Thus, if $w = g_{i_1}...g_{i_m}$ and $w' = g_{j_1}...g_{j_n}$, then
 the product $ww'$ is given by
 $g_{i_1}...g_{i_m}g_{j_1}...g_{j_n}$.
 The empty word $\emptyset $ acts as the identity so that
 $w\emptyset=w=\emptyset w$.
 The length of the word $w = g_{i_1}... g_{i_m}$ is $m$
  and is denoted $|w|$. The length of $\emptyset$ is zero.

\subsubsection{The Set $\mathcal{A}^{\infty}$ of Formal Power Series}
 \label{subsec:formal-pow}
   A {\it formal power series} \index{formal power series}
   with entries from $\C$  is an expression of the form
 \begin{equation}
  \label{eq:formal-series}
    \displaystyle\sum_{w \in \cFd}{f_{w} w}
 \end{equation}
    where $f_{w}\in\mathbb C$ (more general coefficients
  $f_w$ will be considered later).
   It is convenient to sum $f$ according to its
   homogeneous of degree $j$ terms; i.e.,
 \begin{equation}
  \label{eq:formal-series-by-deg}
    f=\sum_{j=0}^\infty \sum_{|w|=j} f_w w = \displaystyle
    \sum_{j=0}^\infty f_j.
 \end{equation}

   Give
  a $d$-tuple $T=(T_1,\dots,T_d)$ of operators on a common
   Hilbert space $\mathcal H$ and a word
  $w = g_{i_{1}}g_{i_{2}}...g_{i_{k}} \in \cFd$,
  $i_{1},...,i_{k} \in \{1, 2,...,d\}$,
  define the evaluation of $w$ at $T$
  by
\[
  T^{w}=T_{i_{1}}T_{i_{2}}...T_{i_{k}}.
\]

  Given a formal power series $f$ as above, define
\begin{equation}
  \label{eq:f(T)}
     f(T) = \displaystyle\sum_{j = 0}^{\infty}
       \sum_{|w| = j}{f_{w} T^{w}}
\end{equation}
   provided the sum converges in the operator norm
   in the indicated order.  Convergence in norm is not terribly
   important here and it is possible to use
   instead convergence in the strong or weak operator topologies
   for instance, \cite{BGM}.

  Fix now a matrix convex set $\mathcal K=(\mathcal K(n))$ which
  satisfies the conditions of Assumption {\ref{assume}}. We will
  write $X \in \cK$ to denote $X \in \bigcup_{n \in \N} \cK(n)$.
   Let
\begin{equation*}
  \cAK= \left\{f = \displaystyle\sum_{w \in F_d}f_w w : \, f_w\in\mathbb C,
  \mbox{and for every } X \in \cK, \,
  f(X) \, \text{ converges }\right\}.
\end{equation*}

   For $f \in \cAK$,
   define
 \[
  \|f\| = \sup \{\|f(X)\| \, : \, X \in \cK\}.
\]
   Of course, as it stands, this supremum can be infinite.
   Let
\begin{equation*}
 \cAK = \left\{f = \displaystyle\sum_{w \in F_d}f_ww : f \in \cAK, \|f\| < \infty \right\}.
\end{equation*}
   Thus, elements of $\cAK$ are in some sense analogous to
   $H^\infty$ functions on the unit disk $\mathbb D$.
   It is not hard to see that
  $\|\cdot\|$ is in fact a norm on $\cAK$ and not just
  a semi-norm.

  It will be shown that $\cAK$ is an algebra and  it will also be necessary -
  and desirable - to consider formal power series with matrix and
   operator-valued
  coefficients.  Discsussion of these topics is postponed until
  after stating a base version of the main result of this paper.

\subsection{The Main Result}
  A set $\Lambda\subset \cFd$ is an {\it initial segment}
 \index{initial segement} if
 its complement is an ideal in the semi-group $\cFd$; i.e., if both
 $g_{j}w, wg_j \in \cFd\setminus \Lambda$ $ (1 \leq j \leq d)$, whenever
 $w \in \cFd\setminus \Lambda$. In the case that $d=1$ an initial
 segment is thus a set  of the form $\{\emptyset,g_1,g_1^2,\dots,g_1^m\}$
 for some $m$.

 A tuple $X\in \cK$ is {\it $\Lambda$-nilpotent}
 \index{$\Lambda$-nilpotent}
 provided $X^v=0$ whenever $v\in\cFd\setminus \Lambda$.
 If $\Lambda$ is a finite initial segment,
 $X\in\cK$ is $\Lambda$-nilpotent,  and
 $f$ is as in equation \eqref{eq:formal-series-by-deg},
 then
\[
  f(X)=\sum_{w\in\Lambda} f_w X^w.
\]

\begin{theorem}
 \label{thm:main}
  Fix a matrix convex set $\cK$ satisfying
  the conditions of Assumption {\ref{assume}}.
  Let $\Lambda$, a finite  initial segment, and
\[
  p=\sum_{w\in\Lambda} p_w w
\]
  be given. There exists $f\in\cAK$ such
  that $f_w=p_w$ for $w\in\Lambda$ and
  $\|f\|\le 1$ if and only if
\[
  \sup\{\| p(X)\| :
   X\in\mathcal K,\ \ X\mbox{ is } \Lambda-\mbox{nilpotent}\}\le 1.
\]
\end{theorem}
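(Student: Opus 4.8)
The plan is to settle the forward implication by direct substitution and the converse by transporting the problem into the quotient operator algebra $\cAK/\cIK$ and computing its norm through completely contractive representations. For the forward implication, suppose $f\in\cAK$ satisfies $\|f\|\le 1$ and $f_w=p_w$ for $w\in\Lambda$. If $X\in\cK$ is $\Lambda$-nilpotent then, as recorded just before the statement, $f(X)=\sum_{w\in\Lambda}f_wX^w=\sum_{w\in\Lambda}p_wX^w=p(X)$, whence $\|p(X)\|=\|f(X)\|\le\|f\|\le 1$; taking the supremum over all such $X$ gives the stated necessary condition.

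For the converse, assume the supremum on the right is at most $1$. The coset $p+\cIK$ in $\cAK/\cIK$ is exactly the set of $f\in\cAK$ with $f_w=p_w$ for every $w\in\Lambda$, so, since the quotient norm on $\cAK/\cIK$ is attained (Section~\ref{sec:quotient}), it suffices to prove $\|p+\cIK\|_{\cAK/\cIK}\le 1$. Because $\cAK/\cIK$ is a unital operator algebra, the Blecher--Ruan--Sinclair theorem realizes it completely isometrically as a subalgebra of $B(\cH)$ for some Hilbert space $\cH$, so its norm equals the supremum of $\|\pi(p+\cIK)\|$ over unital completely contractive representations $\pi$. Fix such a $\pi$, let $q\colon\cAK\to\cAK/\cIK$ be the quotient map, and set $\tilde\pi=\pi\circ q$, a unital completely contractive representation of $\cAK$. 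Put $Y_j=\tilde\pi(g_j)$ and $Y=(Y_1,\dots,Y_d)$. For any word $w\notin\Lambda$ the monomial $w$, regarded as an element of $\cAK$, is bounded on $\cK$ (since $\cK$ is bounded) and has all its $\Lambda$-indexed coefficients equal to $0$, hence lies in $\cIK$; therefore $Y^w=\tilde\pi(w)=0$, so $Y$ is $\Lambda$-nilpotent and $\pi(p+\cIK)=\tilde\pi(p)=p(Y)$.

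It remains to check $\|p(Y)\|\le 1$. Given a unit vector $\xi$ in the space of $\pi$, set $\cM=\operatorname{span}\{Y^w\xi:w\in\Lambda\}$, a finite-dimensional subspace containing $\xi$ (take $w=\emptyset$), let $P$ be the orthogonal projection onto $\cM$, and regard $Z=(PY_1P,\dots,PY_dP)$ as a $d$-tuple of operators on $\cM$. Using that $\Lambda$ is an initial segment one checks that $Z^w\xi=Y^w\xi$ for every $w\in\Lambda$ and $Z^v=0$ for every $v\notin\Lambda$; in particular $p(Z)\xi=p(Y)\xi$ and $Z$ is $\Lambda$-nilpotent. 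Since $\tilde\pi$ is completely contractive, the compression $Z$, viewed as an element of $\Mnd$ with $n=\dim\cM$, lies in the closure of $\cK$ (Section~\ref{sec:reps}). As $\cK$ is convex and contains a neighborhood of $0$, it follows that $rZ\in\cK$ for all $0<r<1$, and each $rZ$ is again $\Lambda$-nilpotent, so the hypothesis gives $\|p(rZ)\|\le 1$; letting $r\to 1$ yields $\|p(Z)\|\le 1$. Hence $\|p(Y)\xi\|=\|p(Z)\xi\|\le\|p(Z)\|\le 1$, and since $\xi$ was an arbitrary unit vector, $\|p(Y)\|\le 1$. Thus $\|\pi(p+\cIK)\|\le 1$ for every $\pi$, so $\|p+\cIK\|_{\cAK/\cIK}\le 1$, and attainment of the quotient norm yields $f\in\cAK$ with $f_w=p_w$ for $w\in\Lambda$ and $\|f\|\le 1$.

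The substantive difficulties lie not in this assembly but in its prerequisites: that $\cAK$ and $\cAK/\cIK$ are abstract operator algebras (verified via the Blecher--Ruan--Sinclair axioms, where the matrix convexity of $\cK$ is the crucial hypothesis --- Sections~\ref{sec:opalgcak} and~\ref{sec:quotient}); that the quotient norm is attained (through a weak-compactness property of $\cAK$ --- Section~\ref{sec:weak}); and that finite-dimensional compressions of the operator tuples arising from completely contractive representations of $\cAK$ lie in the closure of $\cK$ (a matrix Hahn--Banach / Effros--Winkler separation argument --- Section~\ref{sec:reps}). I expect this last point to be the most delicate.
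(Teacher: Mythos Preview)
Your proof is correct and follows essentially the same route as the paper: the paper packages the compression argument (your passage from $Y$ to the finite-dimensional $Z$ together with Proposition~\ref{prop:cutdown}) as Lemma~\ref{lem:normbound}, and uses the single completely isometric BRS representation of $\cAK/\cIK$ rather than a supremum over all completely contractive ones, but the substance and the sequence of ingredients (operator-algebra structure, attainment of the quotient norm, Effros--Winkler separation for the compressions) are identical.
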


 The body of the paper
 contains a more general version of Theorem {\ref{thm:main}}
 allowing for operator-valued coefficients $p_w$ and $f_w$.
 A version of the Theorem for the case of infinite initial
  segments $\Lambda$ is also presented in Section {\ref{sec:further}}
  for some specific noncommutative domains.

\section{The operator algebra $\cAK$}
 \label{sec:opalgcak}
  Broadly speaking, the strategy for proving
  Theorem {\ref{thm:main}} is to realize $\cAK$
  as an operator algebra, note that $\Lambda$
  determines a closed ideal in $\cAK$ and then apply
  the important corollary of the BRS theorem
 (See \cite{P}) which says that the quotient of
 an operator algebra by a closed (two-sided) ideal
 has a completely isometric representation
 as a subalgebra of the bounded operators on some Hilbert space.

  The norm on $\cAK$ defined in  subsection {\ref{subsec:formal-pow}}
  naturally generalizes to $m\times n$ matrices
  with entries from $\cAK$ and the resulting
  sequence of norms makes $\cAK$ an abstract
  operator algebra. This section contains the details
  of the construction beginning with proving that $\cAK$ is an algebra.

\subsection{The Noncommutative Fock Space}
 Let $\Cg=\mathbb C\langle g_1,\dots,g_d\rangle$
 denote the algebra of  non-commuting polynomials
 \index{non-commutative polynomial} in the
 variables $\{g_1,\dots,g_d\}.$ Thus elements
 of $\Cg$ are linear combinations of elements of $\cFd$; i.e.,
 an element of $\Cg$ of degree (at most) $k$
 has the form
\[
  \sum_{j = 0}^k \sum_{|w|= j} {p_{w} w},
\]
 where the $p_w$ are complex numbers.

 To construct the Fock space, $\mathbb F^2$, define an inner product
 on $\Cg$ by defining
\begin{equation*}
  \langle w, v \rangle = \begin{cases} 0 & \text{if }w \neq v\\
                                       1 & \text{if } w = v
     \end{cases}
\end{equation*}
  for $w,v\in\cFd$ and extending by linearity to all of $\Cg$.
  The completion of $\Cg$ in this inner product is then
  the Hilbert space $\mathbb F^2$.

\subsection{The Creation Operators}
 \label{subsec:genesis}
  There are natural isometric operators on $\mathbb F^2$ called
  the creation operators which have been studied intensely in
  part because of their connection to the Cuntz algebra \cite{C}.
  Given $1\le j\le d$, define $S_j:\mathbb F^2\to \mathbb F^2$
  by $S_j v=g_jv$ for a word $v\in\cFd$ and extend $S_j$
  by linearity to all of $\Cg$.  It is readily verified that
  $S_j$ is an isometric mapping of $\Cg$ into itself and it
  thus follows that $S_j$ extends to an isometry on all
  of $\mathbb F^2$. In particular $S_j^* S_j = I$, the identity on $\mathbb F^2$.
  Also of note is the identity,
\begin{equation}
 \label{eq:co-isometry}
   \displaystyle \sum_{j=1}^d S_j S_j^* = P,
\end{equation}
  where $P$ is the projection onto the orthogonal complement of the one-dimensional
  subspace of $\mathbb F^2$ spanned by $\emptyset$, which follows
  by observing, for a word $w\in\cFd$ and $1\le j \le d$, that
\[
  S_{j}^*(w)  = {\begin{cases} w' & {\text{if $w = g_{j}w'$}}\\
                                       0 & {\text{otherwise.}}\end{cases}}
\]

 Of course, as it stands the tuple $S=(S_1,\dots,S_d)$ acts
 on the infinite dimensional Hilbert space $\mathbb F^2$.
 There are however, finite dimensional subspaces which are
 essentially determined by ideals in $\cFd$ and which are
 invariant for each $S_j^*$.

  The subset
  $\Lambda(\ell) = \{w \,:\, |w| \leq \ell\}$ of $\cFd$
  is a canonical example of a finite initial segment.
  Moreover, since each $S_j^*$ leaves $\Lambda(\ell)$
  invariant, the subspace $\mathbb F(\ell)^2$ of $\mathbb F^2$ spanned by
  $\Lambda(\ell)$ is invariant for $S^*$.  Let
  $V(\ell)$ denote the inclusion of  $\mathbb F(\ell)^2$
  into $\mathbb F^2$  and
  let $S(\ell)$ denote the operator $V(\ell)^* S  V(\ell)$.
  Thus, $S(\ell)=((S(\ell))_1,\dots,(S(\ell))_d)$ where
  $(S(\ell))_j = V(\ell)^* S_j V(\ell)$.  Observe,
  with $P$ denoting both the projection of $\mathbb F^2$
  and $\mathbb F(\ell)^2$ onto the orthogonal complement of the
  span of $\emptyset$ in $\mathbb F^2$ and $\mathbb F(\ell)^2$ respectively,
  equation \eqref{eq:co-isometry} yields
\begin{equation*}
 \begin{split}
   P= & V(\ell)^* P V(\ell) \\
     =& V(\ell)^* \left(\displaystyle \sum_{j=1}^d  S_j S_j^*\right) V(\ell) \\
     =& \displaystyle \sum_{j=1}^d (S(\ell))_j (S(\ell))_j^*.
 \end{split}
\end{equation*}
   It follows, for $t<\gamma$, that $tS(\ell) \in \cN_\gamma(n)$,
   where $n=\sum_{j=0}^\ell d^j$ is the dimension of   $\mathbb F(\ell)^2$.

\subsection{The Algebra $\cAK$}
\label{subsec:matrixproduct}
   In addition to the obvious pointwise addition and multiplication
   by scalars,  there is a natural
   multiplication on $\cAK$ extending multiplication of
   polynomials which then turns $\cAK$ into an algebra
   over $\mathbb C$.  Since it will be necessary to consider,
   in the sequel, matrices with entries from $\cAK$ and their
   products, we define them here.

   Let
\[
  M_{p,q}(\cAKn)
   = \left\{f = \displaystyle\sum_{w \in F_d}f_w w : \, f_w\in M_{p,q},
  \mbox{ and for each } X \in \cK, \,
  f(X) \, \text{ converges }\right\}.
\]
 where

 \[
 f(X) = \displaystyle\sum_{j = 0}^{\infty} \sum_{|w|=j} f_w \otimes X^w.
\]

   For $f\in M_{p,q}(\cAKn)$, let
 \begin{equation}
  \label{def:normpq}
  \|f \|=\|f\|_{p,q} =\sup\{\|f(X)\|: \, X \in \cK\}.
 \end{equation}

Define
\begin{equation*}
 M_{p,q}(\cAK) = \left\{f = \displaystyle\sum_{w \in F_d}f_ww : f \in
 M_{p,q}(\cAKn), \|f\| < \infty \right\}.
\end{equation*}

   The following Lemma plays an important role in the analysis
   to follow generally, and in proving that $\cAK$
   is an algebra, in particular.

\begin{lemma}
 \label{lem:A-bounded}
  Suppose that $f = \displaystyle \sum_{w \in \cFd}f_w w \in M_{p,q}(\cAK)$
  and $X \in \mathcal{K}(n)$.
  Let
 \[
   A_j =\sum_{|w|=j} f_w\otimes X^w.
 \]
   If $0 < r < \sup\{0<s : sX\in\mathcal K(n)\}$, then
 \[
   r^j \|A_j \| \le \|f\|.
 \]
  In particular, there is a $\rho<1$ such that $\|A_j\|\le \rho^j \|f\|$.
\end{lemma}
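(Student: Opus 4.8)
The plan is to extract the coefficient block $A_j$ from $f(X)$ by averaging against the circle action, exploiting the assumed circled structure of $\cK$. First I would fix $X\in\cK(n)$ and $r$ with $0<r<\sup\{0<s:sX\in\cK(n)\}$; by that choice $rX\in\cK(n)$, and since $\cK$ is open the slightly smaller $rX$ actually sits in $\cK(n)$ together with a neighborhood, so in particular $e^{i\theta}(rX)=(r e^{i\theta})X\in\cK(n)$ for every $\theta$ because $\cK$ is circled. Because $f\in M_{p,q}(\cAK)$, the series $f(rX)=\sum_{k=0}^\infty \sum_{|w|=k} f_w\otimes (rX)^w = \sum_{k=0}^\infty r^k A_k$ converges in operator norm, and replacing $rX$ by $e^{i\theta} rX$ multiplies the degree-$k$ block by $e^{ik\theta}$, giving
\[
  f(e^{i\theta} rX) = \sum_{k=0}^\infty e^{ik\theta} r^k A_k,
\]
with uniform (in $\theta$) norm control by $\|f\|$, since $e^{i\theta}rX\in\cK$.

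The key step is the Fourier/Cauchy extraction: integrate against $e^{-ij\theta}$,
\[
  r^j A_j = \frac{1}{2\pi}\int_0^{2\pi} e^{-ij\theta} f(e^{i\theta} rX)\, d\theta .
\]
This identity is justified term-by-term using that the series for $f(e^{i\theta}rX)$ converges in norm uniformly in $\theta$ (which follows from Lemma-free reasoning: the tail is dominated by a convergent numerical series independent of $\theta$, because the summands only pick up unimodular factors), so one may interchange sum and integral. Taking norms and using $\|f(e^{i\theta}rX)\|\le\|f\|$ for all $\theta$ yields
\[
  r^j\|A_j\| \le \frac{1}{2\pi}\int_0^{2\pi} \|f(e^{i\theta} rX)\|\, d\theta \le \|f\|,
\]
which is the first assertion.

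For the "in particular" clause, I would use the hypothesis $N_\gamma\subseteq\cK$: for any $X\in\cK(n)$ we have $\|\sum_j X_jX_j^*\|\le\Gamma^2$ since $\cK\subseteq N_\Gamma$, hence $sX\in\cK(n)$ for all $s$ with $s\|X\|$ small enough that $s^2\sum X_jX_j^*<\gamma^2$, i.e. the supremum $\sup\{s:sX\in\cK(n)\}$ is at least $\gamma/\Gamma$ uniformly; more simply, $\cK$ contains $N_\gamma$ and is contained in $N_\Gamma$, so for $X\in N_\Gamma$ one has $(\gamma/\Gamma)X\in N_\gamma\subseteq\cK$. Thus one may take any fixed $r_0$ with $0<r_0<\gamma/\Gamma$, set $\rho=1/r_0>\Gamma/\gamma>1$... no: set $\rho = r_0^{-1}$ is wrong in direction, so instead observe $r_0^j\|A_j\|\le\|f\|$ gives $\|A_j\|\le r_0^{-j}\|f\|$ which grows; the correct reading is that we want decay, so I would instead normalize $X$ first: replacing $X$ by $X/\Gamma'$ for suitable $\Gamma'$ we may assume $X\in N_1$, and then $rX\in\cK$ for all $r<\gamma$, so taking $r$ close to $\gamma>$ some fixed constant and $\rho = \gamma^{-1}$... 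The cleanest route, and the one I would actually write, is: since $\cK\subseteq N_\Gamma$, every $X\in\cK$ satisfies that $sX\in\cK$ for $s$ up to at least $\gamma/\Gamma$, but to get a single $\rho<1$ working for \emph{the fixed} $X$ in the statement, simply take $r$ strictly between $1$ and $\sup\{s:sX\in\cK(n)\}$ when that supremum exceeds $1$ — which it need not. I expect the main obstacle to be exactly this bookkeeping: the statement's $\rho<1$ must be read relative to a normalization making $\sup\{s:sX\in\cK\}>1$, so I would prove it for $X$ in a fixed bounded family (e.g. using $\cK\subseteq N_\Gamma$ to rescale) and let $\rho$ depend only on $\gamma,\Gamma$; the Fourier-extraction core above is routine once the circled and openness hypotheses are invoked.
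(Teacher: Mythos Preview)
Your Fourier/Cauchy extraction argument for the main inequality is correct and is precisely the paper's approach: the paper sets $F(z)=f(zX)$, notes that (by openness, convexity, and circledness of $\cK(n)$) it is defined on a neighborhood of the closed unit disk, writes $F(z)=\sum_j A_j z^j$, and reads off $A_j=\frac{1}{2\pi}\int_0^{2\pi}F(e^{it})e^{-ijt}\,dt$, whence $\|A_j\|\le\|f\|$; replacing $X$ by $rX$ (still in $\cK(n)$) gives the stated $r^j\|A_j\|\le\|f\|$. Your version is the same computation with $rX$ built in from the start.

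Where you go off course is the ``in particular'' clause. You try to manufacture a uniform $\rho$ from the inclusions $N_\gamma\subseteq\cK\subseteq N_\Gamma$ and end up tangled because that route gives $\|A_j\|\le r_0^{-j}\|f\|$ with $r_0<1$, which is the wrong direction. The point you are missing is much simpler and does not require any rescaling or any appeal to $\gamma,\Gamma$: because $\cK(n)$ is \emph{open} and $X\in\cK(n)$, there is $\delta>0$ with $(1+\delta)X\in\cK(n)$, so
\[
  \sup\{\,s>0 : sX\in\cK(n)\,\} \;>\; 1.
\]
Pick any $r$ with $1<r<\sup\{\,s:sX\in\cK(n)\,\}$; the first part of the lemma then gives $r^j\|A_j\|\le\|f\|$, i.e., $\|A_j\|\le\rho^j\|f\|$ with $\rho=r^{-1}<1$. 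The $\rho$ in the lemma is allowed to depend on the fixed $X$; no uniformity in $X$ is claimed or needed. This is exactly how the paper reads the conclusion (its phrase ``$F$ is defined on a neighborhood of the closed unit disk'' is the observation that the supremum exceeds $1$).
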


\begin{proof}
  Because $\mathcal K(n)$ is open, convex, and circled, the
  function $F(z)=f(zX)$ is defined on a neighborhood of
  the closed unit disk $\{|z|\le 1\}$. Thus the series,
\[
  F(z)= \displaystyle \sum_{j=0}^{\infty} A_j z^j
\]
  has radius of convergence exceeding one.  Thus,
  for each $j\in\mathbb N$,
\[
  A_j=\frac{1}{2\pi} \int_0^{2\pi} F(e^{it}) e^{-ijt}\, dt.
\]
  It follows that
\[
  \|A_j\| \le \frac{1}{2\pi} \int_0^{2\pi} \|F(e^{it})\|\, dt.
\]
  Since $\|F(e^{it})\|=\|f(e^{it}X)\|$ and $e^{it}X \in \mathcal{K}(n)$,
  it follows that $\|F(e^{it})\|\le \|f\|$ and the lemma follows.
\end{proof}

Given
$f = \sum_{w \in \cFd} f_{w} w \in M_{p, q} (\cAK)$
and $g =\sum_{w \in \cFd} g_{w} w \in M_{q, r} (\cAK)$,
define the product $fg$ of $f$ and $g$ as
 the convolution product; i.e.,
 \[
    fg = \displaystyle \sum_{w \in \cFd}
     \left(\sum_{uv = w} f_ug_v \right)  w.
\]
 This convolution product corresponds to pointwise product,
 extends the natural product
 of non-commutative polynomials (formal power series with only
 finitely many non-zero coefficients), and makes
 $\cAK$ an algebra.

\begin{lemma}
  \label{lem:product}
    If $f\in M_{p,q}(\cAK)$ and $g\in M_{q,r}(\cAK)$
    and $X \in \cK$, then
 \begin{itemize}
   \item[(i)] $fg(X)$ converges;
   \item[(ii)] $fg(X)=f(X)g(X)$;
   \item[(iii)] $fg$ is in $M_{p,r}(\cAK);$ and
   \item[(iv)]  $\|fg\|\le \|f\|\|g\|$.
 \end{itemize}
\end{lemma}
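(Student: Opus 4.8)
\textbf{Proof plan for Lemma \ref{lem:product}.}
The plan is to prove (i) and (ii) together by a convergence/rearrangement argument, then deduce (iii) and (iv) as consequences of the previous lemma. Fix $X \in \cK(n)$ and write $A_j = \sum_{|w|=j} f_w \otimes X^w$ and $B_j = \sum_{|w|=j} g_w \otimes X^w$, so that $f(X) = \sum_j A_j$ and $g(X) = \sum_j B_j$ converge in operator norm by hypothesis. First I would observe that the homogeneous-degree-$k$ term of the convolution product $fg$, evaluated at $X$, is exactly $\sum_{i+j=k} A_i B_j$: this is the identity $\sum_{uv=w}(f_u \otimes X^u)(g_v \otimes X^v) = \sum_{uv=w} f_u g_v \otimes X^w$ summed over $|w|=k$, using $X^u X^v = X^{uv}$ and the mixed-product property of the tensor product. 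So $fg(X) = \sum_{k=0}^\infty \sum_{i+j=k} A_i B_j$, which is the Cauchy product of the two series $\sum A_i$ and $\sum B_j$.

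The key step is that this Cauchy product converges to $f(X)g(X)$. Here I would invoke Lemma \ref{lem:A-bounded}: since $X \in \cK(n)$ is an interior point (by Assumption \ref{assume}(a)), there is $r>1$ with $rX \in \cK(n)$, hence there is $\rho < 1$ with $\|A_j\| \le \rho^j \|f\|$ and similarly $\|B_j\| \le \sigma^j \|g\|$ for some $\sigma<1$ — indeed we may take a common $\rho<1$ working for both. Thus $\sum_j \|A_j\|$ and $\sum_j \|B_j\|$ are both absolutely convergent (dominated by geometric series), and the classical theorem on the Cauchy product of two absolutely convergent series — valid verbatim in any Banach algebra, here $B(\C^p \otimes \C^n, \C^r \otimes \C^n)$ after pairing with the appropriate intermediate space — gives that $\sum_k \sum_{i+j=k} A_i B_j$ converges absolutely and equals $(\sum_i A_i)(\sum_j B_j) = f(X) g(X)$. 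This establishes (i) and (ii) simultaneously. I expect this to be the only real content of the lemma; everything rests on the geometric-decay estimate from Lemma \ref{lem:A-bounded}, which is where openness of $\cK$ is used.

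For (iii) and (iv), with (i) and (ii) in hand, for every $X \in \cK$ we have $\|fg(X)\| = \|f(X)g(X)\| \le \|f(X)\|\,\|g(X)\| \le \|f\|\,\|g\|$, using submultiplicativity of the operator norm and the fact that $\|f(X)\| \le \|f\|$, $\|g(X)\| \le \|g\|$ by the definitions \eqref{def:normpq}. Taking the supremum over $X \in \cK$ gives $fg \in M_{p,r}(\cAKn)$ with $\|fg\| \le \|f\|\,\|g\| < \infty$, so in fact $fg \in M_{p,r}(\cAK)$, proving (iii) and (iv). The main obstacle, such as it is, is purely bookkeeping: being careful that the absolute convergence supplied by Lemma \ref{lem:A-bounded} genuinely justifies the rearrangement implicit in identifying the Cauchy product of $\sum A_i$ and $\sum B_j$ with $fg(X)$ — i.e. that the doubly-indexed family $\{A_i B_j\}_{i,j}$ is summable — but this is standard once the geometric bounds are noted.
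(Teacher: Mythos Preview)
Your proof is correct and follows essentially the same approach as the paper: both define the homogeneous parts $A_j,B_j$, invoke Lemma~\ref{lem:A-bounded} to obtain geometric decay $\|A_j\|,\|B_j\|\le\rho^j$, use this to justify that the Cauchy product $\sum_k\sum_{i+j=k}A_iB_j$ converges to $f(X)g(X)$, and then read off (iii) and (iv) from (ii). The paper packages the Cauchy-product step via the one-variable power series $F(z)=f(zX)$, $G(z)=g(zX)$ and the identity $F(z)G(z)=fg(zX)$ at $z=1$, whereas you invoke the abstract Cauchy-product theorem directly; these are the same argument in slightly different dress.
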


\begin{corollary}
\label{cor:algebra}
  $\cAK$ is an algebra.
\end{corollary}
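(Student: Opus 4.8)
\emph{Proof plan.} The plan is to obtain Corollary~\ref{cor:algebra} as essentially a restatement of Lemma~\ref{lem:product} in the scalar case $p=q=r=1$, together with the routine verification of the algebra axioms. First I would record that $\cAK$ is a complex vector space: it sits inside the linear space of all formal power series $\sum_{w\in\cFd} f_w w$ with $f_w\in\C$, and since $(f+g)(X)=f(X)+g(X)$ and $(\lambda f)(X)=\lambda f(X)$ for every $X\in\cK$, we get $\|f+g\|\le\|f\|+\|g\|$ and $\|\lambda f\|=|\lambda|\,\|f\|$, so the subset on which $\|\cdot\|<\infty$ is closed under addition and scalar multiplication. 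The constant series $\emptyset$ lies in $\cAK$ with norm $1$, so in fact $\cAK$ is unital.

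Next I would invoke Lemma~\ref{lem:product} with $p=q=r=1$: for $f,g\in\cAK$ the convolution product $fg=\sum_{w\in\cFd}\bigl(\sum_{uv=w}f_ug_v\bigr)w$ has the properties that $fg(X)$ converges for every $X\in\cK$, that $fg(X)=f(X)g(X)$, and that $\|fg\|\le\|f\|\,\|g\|$. The first two say $fg$ again belongs to the class of everywhere-convergent power series and the third says $\|fg\|<\infty$; hence $fg\in\cAK$ and the norm is submultiplicative. Thus $\cAK$ is closed under the convolution product.

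Finally I would check associativity and the two distributive laws. These are purely formal: the coefficient of a word $w$ in $(fg)h$ and in $f(gh)$ is in each case $\sum_{uvs=w} f_u g_v h_s$, the coefficient of $w$ in $(f+g)h$ equals that in $fh+gh$, similarly on the other side, and compatibility with scalars is immediate; so they follow with no analysis. Alternatively, since Lemma~\ref{lem:product}(ii) and linearity of evaluation make each map $f\mapsto f(X)$, $X\in\cK(n)$, an algebra homomorphism $\cAK\to M_n$, and a power series is visibly determined by its coefficients, the identities can be transported from $M_n$. Combining these steps shows $\cAK$ is a unital algebra over $\C$.

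I do not expect a genuine obstacle here: all the substance is already contained in Lemma~\ref{lem:product} (and, through its proof, in Lemma~\ref{lem:A-bounded}). The only point needing a moment's attention is that one must use all of parts (i)--(iv) of Lemma~\ref{lem:product} at once to conclude simultaneously that $fg$ lies in $\cAK$ and that its norm is finite and submultiplicative; the corollary itself is the bookkeeping that packages the lemma.
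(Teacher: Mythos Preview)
Your proposal is correct and matches the paper's approach: the paper states Corollary~\ref{cor:algebra} immediately after Lemma~\ref{lem:product} without a separate proof, treating it as the evident specialization of the lemma to $p=q=r=1$ together with the routine vector-space and associativity/distributivity checks you spell out. You have simply made explicit the bookkeeping the paper leaves to the reader.
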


\begin{proof}[Proof of Lemma {\ref{lem:product}}]
   Fix $X \in \cK$. As in the proof of
  Lemma {\ref{lem:A-bounded}}, let
\[
 \begin{split}
  A_j=&\sum_{|w|=j} f_w\otimes X^w, \\
  B_j=&\sum_{|w|=j} g_w \otimes X^w \\
  C_j=&\sum_{|w|=j} (\sum_{uv=w} f_ug_v)\otimes X^w.
 \end{split}
\]
  Observe that $C_j =  \sum_{k=0}^j A_k  B_{j-k}$.

  Let $F(z)=f(zX)$ and $G(z)=g(zX)$, both of
  which are defined in a neighborhood of $\{|z|\le 1\}$.
  From Lemma {\ref{lem:A-bounded}}, there is an $\rho<1$
  such that $\|A_m\|\le \rho^m \|f\|$
  and $\|B_k\|\le \rho^k \|g\|$.  Hence
 \[
   \|C_j\|\le (j+1) \|f\|\, \|g\| \rho^j.
 \]
 It follows that, for $|z|<\frac{1}{\rho}$, the series
 \[
   \displaystyle \sum_{j=0}^{\infty} (\sum_{k=0}^j A_k B_{j-k}) z^j
 \]
   converges absolutely.  In particular $fg(X)=
   \sum_{j=0}^{\infty} C_j$
   converges in norm.

  For $|z|<\frac{1}{\rho}$, one verifies that
\begin{equation*}
 \begin{split}
  F(z)G(z)= & \displaystyle \sum_{j=0}^{\infty} \sum_{k=0}^j A_k B_{j-k} z^j \\
          = & fg(zX).
 \end{split}
\end{equation*}
  Choosing $z=1$ gives $f(X)g(X)=fg(X)$.

  Since, for each $X \in \cK$,
  $fg(X)=f(X)g(X)$ it follows that $\|fg(X)\|\le \|f\|\, \|g\|$.
  Thus $\|fg\|\le \|f\|\, \|g\|$ and $fg\in M_{p,r}(\cAK)$.
\end{proof}

\subsection{The Abstract Unital Operator Algebra $\cAK$}
In this section, for the convenience of the reader, the definition of an
abstract operator algebra is reviewed. Following that, it is shown
that $\cAK$ with the norms $\|\cdot\|_{p,q}$ on $M_{p,q}(\cAK)$
given in equation \eqref{def:normpq} is an abstract operator algebra.

\subsubsection{Abstract Operator Algebra}
 Let $V$ be a complex vector space and $M_{p,q}(V)$ denote
 the set of all $p \times q$ matrices with entries from $V$.
 $V$ is said to be a {\it matrix normed space}
 \index{matrix normed space}  provided that there exist norms $\|\cdot\|_{p,q}$
 on $M_{p , q} (V)$ that satisfy
 \begin{equation*}
 \|A \cdot X \cdot B\|_{\ell,r} \leq \|A\| \|X\|_{p,q}  \|B\|
 \end{equation*}
 for all $A \in M_{\ell,p}, X \in M_{p,q}(V), B \in M_{q,r}$.

A matrix normed space $V$ is said to be an {\it abstract operator space}
\index{abstract operator space} if
\begin{equation*}
\|X \oplus Y\|_{p+ \ell,q+r} = \max \{\|X\|_{p,q} , \|Y\|_{\ell,r}\}
\end{equation*}
where $X \in M_{p,q}(V)$ and $Y \in M_{\ell,r}(V)$ and
$X \oplus Y = \begin{pmatrix}X & 0 \\ 0 & Y\end{pmatrix}$.

$V$ is an {\it abstract unital operator algebra} \index{abstract operator algebra}
if $V$ is a unital algebra, an abstract operator space and if the product on $V$
is completely contractive i.e. $\|XY\|_p \leq 1$ whenever $\|X\|_p \leq 1$
and $\|Y\|_p \leq 1$ for all $X, Y \in M_p(V)$ and for all $p$.

\subsubsection{The Abstract Unital Operator Algebra $\cAK$}

Consider $\cAK$ with $\| \cdot\|_{p,q}$ being
the norm on $M_{p,q}(\cAK)$ defined in subsection
{\ref{subsec:matrixproduct}} (see equation \eqref{def:normpq}).

\begin{theorem}
$\cAK$ with the family of norms $\| \cdot \|_{p,q}$, is an abstract
unital operator algebra.
\end{theorem}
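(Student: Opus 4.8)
The plan is to verify, in order, the three defining properties of an abstract unital operator algebra for $\cAK$ equipped with the norms $\|\cdot\|_{p,q}$ from \eqref{def:normpq}: that it is a matrix normed space, that it is an abstract operator space (the direct-sum identity), and that multiplication is completely contractive. The unital algebra structure is already in hand by Corollary \ref{cor:algebra} (with unit the constant power series $\emptyset$). The key observation throughout is that all three properties hold \emph{pointwise}: for any fixed $X \in \cK(n)$, the evaluation map $f \mapsto f(X)$ sends $M_{p,q}(\cAKn)$ into $M_{p,q}(M_n) \cong M_{pn,qn}$, which carries its canonical operator space structure, and $\|f\|_{p,q} = \sup_{X \in \cK} \|f(X)\|$ is a supremum over these honest matrix norms. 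So the strategy is to push each axiom through evaluation and then take the supremum.

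First I would check the matrix-normed-space axiom. Given $A \in M_{\ell,p}$, $f \in M_{p,q}(\cAK)$, $B \in M_{q,r}$, and $X \in \cK(n)$, one has $(AfB)(X) = (A \otimes I_n) f(X) (B \otimes I_n)$ directly from the definition of $f(X)$ and the linearity of the coefficient-wise matrix multiplication. Since $\|A \otimes I_n\| = \|A\|$ and likewise for $B$, this gives $\|(AfB)(X)\| \le \|A\|\,\|f(X)\|\,\|B\| \le \|A\|\,\|f\|_{p,q}\,\|B\|$; taking the supremum over $X \in \cK$ yields $\|AfB\|_{\ell,r} \le \|A\|\,\|f\|_{p,q}\,\|B\|$. (That each $\|\cdot\|_{p,q}$ is genuinely a norm, not merely a seminorm, follows as for the scalar case noted in subsection \ref{subsec:formal-pow}, using that $\cK$ contains a noncommutative neighborhood of $0$, so $f(X) = 0$ for all $X \in \cK$ forces every coefficient $f_w$ to vanish.) Next, for the operator-space axiom: for $f \in M_{p,q}(\cAK)$ and $g \in M_{\ell,r}(\cAK)$ and any $X \in \cK(n)$, we have $(f \oplus g)(X) = f(X) \oplus g(X)$ as operators on the appropriate direct sum of Hilbert spaces, so $\|(f\oplus g)(X)\| = \max\{\|f(X)\|, \|g(X)\|\}$. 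Taking the supremum over $X \in \cK$ and using that the same $X$ is used for both summands gives $\|f \oplus g\|_{p+\ell,\,q+r} = \max\{\|f\|_{p,q},\|g\|_{\ell,r}\}$; the inequality $\le$ is immediate and $\ge$ follows because $\|(f\oplus g)(X)\|$ dominates each of $\|f(X)\|$ and $\|g(X)\|$ individually.

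Finally, complete contractivity of the product: for $f, g \in M_p(\cAK)$, Lemma \ref{lem:product} already gives $\|fg\|_p \le \|f\|_p\,\|g\|_p$ (apply it with $p=q=r$), which is exactly the required estimate and in particular shows $\|fg\|_p \le 1$ when $\|f\|_p, \|g\|_p \le 1$. So this axiom costs nothing beyond citing the lemma. Assembling these three verifications, together with the algebra structure from Corollary \ref{cor:algebra} and the fact that the constant series $\emptyset$ is a two-sided identity of norm one in every $M_p(\cAK)$, completes the proof. I do not expect a genuine obstacle here: the content is entirely in the already-proven Lemmas \ref{lem:A-bounded} and \ref{lem:product} (which is what makes the norms finite and submultiplicative and the algebra well-defined), and the remaining work is the routine bookkeeping of transporting operator-space identities through the evaluation maps $f \mapsto f(X)$ and taking suprema. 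If anything requires a moment's care it is confirming that $\|\cdot\|_{p,q}$ separates points, i.e. is a norm and not just a seminorm, which is where assumption (e) (a noncommutative neighborhood of $0$ inside $\cK$) gets used.
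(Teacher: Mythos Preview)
Your proposal is correct and follows essentially the same approach as the paper: verify each operator-algebra axiom pointwise via the evaluation maps $f \mapsto f(X)$ and then take the supremum over $X \in \cK$, invoking Lemma~\ref{lem:product} for the multiplicative estimate. The only cosmetic differences are that the paper cites Lemma~\ref{lem:product}(ii) for the matrix-normed-space axiom rather than writing out $(AfB)(X) = (A\otimes I_n)f(X)(B\otimes I_n)$ directly, and it handles the $\ge$ direction of the direct-sum identity with an explicit $\epsilon$-argument, whereas your observation that $\|(f\oplus g)(X)\| \ge \|f(X)\|$ for every $X$ gives the same bound more cleanly.
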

\begin{proof}
Let $A \in M_{\ell,p}$, $F \in M_{p,q}(\mathcal{A^\infty})$, $B \in M_{q,r}$.
Interpret $A$ and $B$ as $A\emptyset \in M_{\ell,p}(\cAK)$
and $B \emptyset \in M_{q,r}(\cAK)$ respectively. For
notation ease we will drop the subscripts that go with the norms.
It follows from
Lemma {\ref{lem:product}}(ii) that for all $X \in \mathcal{K}(n)$,
\begin{equation*}
\|AFB(X)\| = \|A(X)F(X)B(X)\| \leq \|A \otimes I_n\|\, \|F(X)\| \, \|B \otimes I_n\|
 \leq \|A\| \, \|F\| \, \|B\|.
\end{equation*}
Thus,
\begin{equation}
\| A F B\|  \leq \|A\|\, \|F\|\, \|B\|.
\end{equation}
Let $F \in M_{\ell, r}(\cAK), \, G \in M_{p,q}(\cAK), \, X \in \mathcal{K}(n)$.
Observe that
\begin{equation*}
\|F \oplus G \, (X) \| =\left\Vert \begin{pmatrix}F(X) & 0 \\ 0 &
G(X)\end{pmatrix}
\right\Vert \leq \max \{\|F(X)\|,\|G(X)\|\} \leq  \max\{\|F\|, \|G\|\}.
\end{equation*} Thus,
\begin{equation}
\label{eq:lessthan}
\|F \oplus G\| \leq  \max\{\|F\|, \|G\|\}
\end{equation}
Let $\epsilon > 0$ be given. Without loss of generality assume that
$\|F\| \geq \|G\|$.
 Choose $m \in \N$ and $R \in \mathcal{K}(m)$ such that $\|F(R)\|
 > \|F\| - \epsilon$.
 Therefore
\begin{equation}
\label{eq:greaterthan}
\| F \oplus G \| \geq \left\Vert \begin{pmatrix} F(R) & 0 \\ 0 &
G(R)\end{pmatrix} \right\Vert
\geq \|F(R)\| >  \|F\| - \epsilon.
\end{equation}
Letting $\epsilon \rightarrow 0$ in the inequality ({\ref{eq:greaterthan}}) and from
the inequality ({\ref{eq:lessthan}})
it follows that,
\begin{equation}
\|F \oplus G\| = \max\{\|F\| , \|G\|\}.
\end{equation}
Lastly, complete contractivity of multiplication in $M_p(\cAK)$
follows directly from
Lemma {\ref{lem:product}} (iv). Thus $\cAK$ is an abstract
operator algebra. $\emptyset \in \cAK$ is the multiplicative unit.
\end{proof}

\section{Weak Compactness and  $\cAK$}
 \label{sec:weak}
 In this section it is shown that every bounded
 sequence in $\cAK$ has a pointwise
 convergent subsequence.
 Indeed, $\cAK$ has weak compactness properties
 with respect to bounded pointwise convergence
 mirroring those for $H^\infty$, the usual
 space of bounded analytic functions on
  the unit disk  $\mathbb D$.

 The results easily extend to formal power
 series with matrix coefficients and it
 is at this level of generality that they
 are needed in the sequel.

\begin{proposition}
 \label{prop:bounded-pointwise}
  Suppose that $f_m=  \sum_{w\in\cFd} (f_m)_w w$ is a
  $M_{p,q}(\cAK)$ sequence.  If, for each $X\in\cK$
  the sequence $f_m(X)$ converges or
  if for each $w \in \cFd$ the sequence $(f_m)_w$ converges,
  and if $(f_m)$
  is a bounded sequence (so there is a constant $c$ such
  that $\|f_m\|\le c$ for all $m$), then there is
  an $f\in M_{p,q}(\cAK)$ such that $f_m(X)$ converges to
  $f(X)$ for each $X \in\cK$ and moreover $\|f\|\le c$.
\end{proposition}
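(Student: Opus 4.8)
The plan is to reduce the two alternative hypotheses to a single common one — that the coefficients $(f_m)_w$ converge in $M_{p,q}$ — and then to promote coefficientwise convergence to pointwise convergence of $f_m(X)$ by a Weierstrass-type argument resting on the uniform geometric decay provided by Lemma~\ref{lem:A-bounded}.

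First I would check that the pointwise hypothesis implies the coefficient hypothesis. Fix $w\in\cFd$, put $\ell=|w|$, and choose $t$ with $0<t<\gamma$, so that $tS(\ell)\in\cN_\gamma(n)\subseteq\cK$ (with $n$ the dimension of $\mathbb F(\ell)^2$) by \eqref{eq:above-below} and the computation in Section~\ref{sec:opalgcak}. Applying $f_m(tS(\ell))$ to a vector of the form $\xi\otimes\emptyset$ and using that $(S(\ell))^v\emptyset$ equals the word $v$ when $|v|\le\ell$ and vanishes otherwise, one gets $f_m(tS(\ell))(\xi\otimes\emptyset)=\sum_{|v|\le\ell}t^{|v|}\,(f_m)_v\xi\otimes v$; pairing with $\eta\otimes w$ isolates the scalar $t^{|w|}\langle (f_m)_w\xi,\eta\rangle$. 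Since $f_m(tS(\ell))$ converges in operator norm, these scalars converge for all $\xi,\eta$, hence $(f_m)_w$ converges in $M_{p,q}$, say to $f_w$. So in either case we may assume $(f_m)_w\to f_w$ for every $w$, and we set $f=\sum_w f_w w$.

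Now fix $X\in\cK(n)$ and set $A_j^{(m)}=\sum_{|w|=j}(f_m)_w\otimes X^w$ and $A_j=\sum_{|w|=j}f_w\otimes X^w$. By Lemma~\ref{lem:A-bounded} there is $\rho<1$, depending on $X$ but not on $m$, with $\|A_j^{(m)}\|\le\rho^j\|f_m\|\le c\rho^j$; since $A_j^{(m)}\to A_j$ as a finite sum of convergent terms, also $\|A_j\|\le c\rho^j$, so $f(X)=\sum_j A_j$ converges absolutely. For $f_m(X)\to f(X)$ I would split, for any $N$, $\|f_m(X)-f(X)\|\le\sum_{j<N}\|A_j^{(m)}-A_j\|+2c\rho^N/(1-\rho)$, choosing first $N$ large to control the tail and then $m$ large. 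Finally, norm-continuity gives $\|f(X)\|=\lim_m\|f_m(X)\|\le\sup_m\|f_m\|\le c$ for every $X\in\cK$, whence $\|f\|\le c$; in particular $\|f\|<\infty$, so $f\in M_{p,q}(\cAK)$.

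The only genuine obstacle is the interchange of $\lim_m$ with $\sum_j$ in the series for $f_m(X)$, and its resolution is exactly the uniform-in-$m$ geometric majorant $c\rho^j$ from Lemma~\ref{lem:A-bounded}: the rate $\rho<1$ is forced by openness of $\cK(n)$ at $X$ and is independent of the index $m$. The coefficient-extraction via scaled truncated creation operators and the closing norm estimate are then routine.
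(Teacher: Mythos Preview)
Your proof is correct and follows essentially the same route as the paper's: reduce to coefficientwise convergence by evaluating at the scaled truncated creation operators $tS(\ell)$, then use the uniform-in-$m$ geometric bound $\|A_j^{(m)}\|\le c\rho^j$ from Lemma~\ref{lem:A-bounded} to interchange the limit in $m$ with the sum over $j$. The paper's proof is terser---it stops after recording the $c\rho^j$ estimate and declares the conclusions ``easily seen to follow''---while you spell out the tail-plus-finite-sum splitting and the final norm bound, but the underlying argument is identical.
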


\begin{proof}
  If $f_m$ converges pointwise, then, by considering
  $f_m(S(\ell))$ where $S(\ell)$ is defined in
  Subsection {\ref {subsec:genesis}}, it follows that
  $(f_m)_w$ converges to some $f_w$ for
  each word $w$.  Hence, to prove the Proposition
  it suffices to prove, if $(f_m)_w$ converges
  to $f_w$ for each $w$ and $\|f_m\|\le c$
  for each $m$, then for each $X\in\cK$,
  the series
 \[
   f(X)=\sum_{j=0}^\infty \sum_{|w|=j} f_w \otimes X^w
 \]
  converges and $(f_m(X))$ converges to $f(X)$.

  For any $X\in \cK$,
\begin{equation*}
\displaystyle\sum_{|w|=j} (f_{m})_w \otimes X^w
   \rightarrow \displaystyle\sum_{|w|=j} f_w \otimes X^w.
\end{equation*}
From Lemma {\ref{lem:A-bounded}}, there is a $\rho < 1$
 such that for all $j \in \N$,
\begin{equation*}
\| \displaystyle\sum_{|w|=j} (f_{m})_w \otimes X^w\|
    \leq \rho^j c,
\end{equation*}
 an estimate from which the conclusions of the
 proposition are easily seen to follow.
\end{proof}

\begin{lemma}
\label{lem:subsequence}
  If $f_m =\sum_{w \in \cFd}(f_m)_w  w \in M_{p,q}(\cAK)$
  satisfies $\|f_m\| \leq c$ for all $m \in \N$ then,
\begin{itemize}
   \item[(i)]  $\|(f_m)_w\| \leq \frac{c}{\gamma^{|w|}}$
 for all $w \in \cFd$ and for all $m \in \N$;
   \item[(ii)]  There exists a subsequence $\{f_{m_k}\}$ of $\{f_m\}$
    and $f_w \in M_{p,q}$ such that
   ${(f_{m_k})}_w \rightarrow f_w$ for all $w$;
   \item[(iii)] Let
    $f=\displaystyle \sum_{w \in \cFd} f_w w$.  For
    each $X\in\cK$ the sequence $(f_{m_k}(X))$ converges
    to $f(X)$ and moreover $\|f(X)\|\le c$.
 \end{itemize}
\end{lemma}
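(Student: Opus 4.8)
The plan is to establish the three parts of Lemma \ref{lem:subsequence} in sequence, using the already-proven Lemma \ref{lem:A-bounded} and Proposition \ref{prop:bounded-pointwise} as the main inputs, together with a diagonalization argument.

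For part (i), I would evaluate $f_m$ at the finite-dimensional compressions $S(\ell)$ of the creation operators constructed in Subsection \ref{subsec:genesis}. For a word $w$ with $|w|=\ell$, applying $f_m(tS(\ell))$ and extracting the homogeneous-degree-$|w|$ part via Lemma \ref{lem:A-bounded} gives $t^{|w|}\|\sum_{|v|=|w|}(f_m)_v\otimes (S(\ell))^v\|\le \|f_m\|\le c$ for every $t<\gamma$, since $tS(\ell)\in\cN_\gamma\subseteq\cK$. The point is that the map $v\mapsto (S(\ell))^v$ sends distinct words of length $\le\ell$ to the orthonormal basis vectors (more precisely, $(S(\ell))^v\emptyset = v$ for $|v|\le\ell$), so the operator $\sum_{|v|=|w|}(f_m)_v\otimes(S(\ell))^v$ applied to the vector $\emptyset$ recovers $(f_m)_w\otimes$(basis vector), and hence its norm dominates $\|(f_m)_w\|$. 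Letting $t\uparrow\gamma$ yields $\|(f_m)_w\|\le c/\gamma^{|w|}$.

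For part (ii), by part (i) each sequence $\{(f_m)_w\}_m$ is a bounded sequence in the finite-dimensional space $M_{p,q}$, hence has a convergent subsequence. Since $\cFd$ is countable, a standard diagonal argument produces a single subsequence $\{f_{m_k}\}$ along which $(f_{m_k})_w\to f_w$ for every $w\in\cFd$ simultaneously. For part (iii), I would simply invoke Proposition \ref{prop:bounded-pointwise} applied to the subsequence $\{f_{m_k}\}$: it is bounded by $c$ and its coefficients converge to $f_w$, so the proposition gives $f\in M_{p,q}(\cAK)$ with $f_{m_k}(X)\to f(X)$ for all $X\in\cK$ and $\|f\|\le c$, which in particular yields $\|f(X)\|\le c$ for every $X\in\cK$.

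The only genuinely delicate point is part (i): one must verify carefully that evaluation at the compressed creation operators $S(\ell)$ separates the coefficients of words of length $\le\ell$ and that the norm estimate survives the compression $V(\ell)^*\,\cdot\,V(\ell)$. This is where the structure of the Fock space $\mathbb F^2$ — specifically that $\{w:|w|\le\ell\}$ is an orthonormal basis of $\mathbb F(\ell)^2$ and $S(\ell)^v\emptyset=v$ — does the work; everything else is routine once Lemma \ref{lem:A-bounded} and Proposition \ref{prop:bounded-pointwise} are in hand.
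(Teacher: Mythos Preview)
Your proposal is correct and matches the paper's proof essentially line for line: part (i) via evaluation at $tS(\ell)$ together with Lemma~\ref{lem:A-bounded} and the Fock-space orthonormality of $\{S(\ell)^v\emptyset\}_{|v|\le\ell}$, part (ii) by a diagonal argument, and part (iii) by invoking Proposition~\ref{prop:bounded-pointwise}. The one imprecision is that applying $\sum_{|v|=|w|}(f_m)_v\otimes S(\ell)^v$ to $x\otimes\emptyset$ (for a unit vector $x\in\mathbb{C}^q$) produces the whole sum $\sum_{|v|=|w|}(f_m)_v x\otimes v$, not a single term; but orthonormality of the $v$'s then gives $\|(f_m)_w x\|$ as a lower bound on the norm, exactly as the paper does.
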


\begin{proof}
To prove item (i), Recall $\gamma$ from the definition
of $\mathcal{K}$. Let $t < \gamma$ and $x \in \C^q$ be a unit vector.
For $j = 0, 1, 2, ..., \ell$, the hypothesis
$\|f_m\| \leq c$ together with the conclusion of
Lemma {\ref{lem:A-bounded}} for $X = tS({\ell})$
imply that

\begin{equation*}
\| \displaystyle  t^{j} \sum_{|w| = j} (f_m)_w  \otimes  S({\ell})^w \| \leq c.
\end{equation*}

Hence
\begin{align*}
c^2 &\geq  \| \displaystyle  t^{j} \sum_{|w| = j} (f_m)_w x
\otimes  S({\ell})^w \emptyset \|^2\\
    &= t^{2j}  \displaystyle\sum_{|w| = j} \|(f_m)_w x\|^2\\
    &\geq t^{2j} \|(f_m)_w x\|^2.
\end{align*}
Since $x$ and $\ell$ are arbitrary, letting $t \uparrow \gamma$
it follows that
 $\|(f_m)_w\| \leq \frac{c}{\gamma^{|w|}}$ for all $m \in \N$.

The proof of item (ii) uses a standard diagonal  argument.
Let $\{w_1, w_2, ...\}$ be an enumeration
of words in $\cFd$ which respects length (i.e.,
if $v\le w$, the $|v|\le |w|$).
Since $\|(f_m)_{w_1} \| \leq \frac{c}{\gamma^{|w_1|}}$, there
exists a subsequence say, $\{f_{1,m}\}$ of $\{f_m\}$ such
 that $(f_{1,m})_{w_1} \rightarrow f_{w_1}$.
Since $\|(f_{1,m})_{w_2}\| \leq \frac{c}{\gamma^{|w_2|}}$,
 there exists a subsequence say,
$\{f_{2,m}\}$ of $\{f_{1,m}\}$ and thereby of $\{f_m\}$,
such that $(f_{2,m})_{w_2} \rightarrow f_{w_2}$.
Continue this procedure to obtain a subsequence $\{f_{k,m}\}$
of $\{f_{k-1,m}\}$ and thereby of $\{f_m\}$
such that for all $k \in \N$,
\begin{equation*}
(f_{k,m})_{w_k} \rightarrow f_{w_k}.
\end{equation*}

 Now consider the diagonal sequence $\{f_{m,m}\}$.
 It follows that $\{f_{m,m}\}$ is a subsequence of $\{f_m\}$
 and satisfies $(f_{m,m})_w \rightarrow f_w$ for all $w \in \cFd$.

 In view of what has already been proved,  an
 application of Proposition {\ref{prop:bounded-pointwise}}
 proves item (iii).
\end{proof}

\section{The operator algebra  $\cAK / \cIK$}
 \label{sec:quotient}
 In this section we consider the ideal $\cIK$  of the
 algebra $\cAK$ determined by a finite inital
 segment $\Lambda$. It is shown that the quotient algebra
 $\cAK/\cIK$ is infact an abstract unital operator algebra.
 It is also established that norms of classes in the
 quotient algebra are attained.

\subsection{The Abstract Unital Operator Algebra $\cAK / \cIK$}
For the initial segment $\Lambda \subset \cFd$, let
\begin{equation*}
\cIK = \left\{ f = \displaystyle\sum_{w \not\in \Lambda} f_w w \quad : \quad
\|f\| < \infty \right\} \subset \cAK
\end{equation*}
Observe that $\cIK$ is a closed two-sided ideal in the operator algebra $\cAK$.
 The usual identification of
 $M_{p, q}(\cAK / \cIK)$
with $M_{p, q}(\cAK) / M_{p, q}(\cIK)$
 yields the well known fact that the
 quotient of an abstract operator algebra
  by a closed two sided ideal is again
 an abstract operator algebra (see Exercises 13.3 \& 16.3 in \cite{P}).
  We formally record this fact.

\begin{theorem}
 \label{thm:quotient-op-alg}
 $\cAK / \cIK$ is an abstract unital operator algebra.
\end{theorem}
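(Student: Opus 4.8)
The plan is to put on $M_{p,q}(\cAK/\cIK)$ the quotient norm transported through the standard identification $M_{p,q}(\cAK/\cIK) \cong M_{p,q}(\cAK)/M_{p,q}(\cIK)$, i.e.
\[
  \|\xi\|_{p,q} \;=\; \inf\bigl\{\,\|F\|_{p,q} \;:\; F\in M_{p,q}(\cAK),\ F + M_{p,q}(\cIK) = \xi \,\bigr\},
\]
and then to verify the three defining properties of an abstract unital operator algebra one at a time, deducing each from the corresponding property of $\cAK$ already established, by the routine ``lift a representative, estimate, let $\epsilon\to 0$'' argument. The class $\emptyset + \cIK$ will serve as the multiplicative unit.

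First I would note that each $\|\cdot\|_{p,q}$ is a genuine norm, not merely a seminorm: since $\cIK$ is closed in $\cAK$ and each coefficient functional $f\mapsto f_w$ is bounded on $\cAK$ (Lemma \ref{lem:subsequence}(i) gives $\|f_w\|\le c/\gamma^{|w|}$ when $\|f\|\le c$), the subspace $M_{p,q}(\cIK)$ is closed in $M_{p,q}(\cAK)$, so the ordinary Banach-space quotient construction applies. For the matrix-normed inequality $\|A\xi B\|_{\ell,r}\le \|A\|\,\|\xi\|_{p,q}\,\|B\|$ with $A\in M_{\ell,p}$, $B\in M_{q,r}$: choose $F$ representing $\xi$ with $\|F\|\le\|\xi\|_{p,q}+\epsilon$; because $M_{\cdot,\cdot}(\cIK)$ is an ideal, $AFB$ represents $A\xi B$, and the matrix-normed property of $\cAK$ gives $\|A\xi B\|\le\|AFB\|\le\|A\|(\|\xi\|_{p,q}+\epsilon)\|B\|$; let $\epsilon\downarrow 0$. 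Complete contractivity of the product is identical: for $\xi,\eta\in M_p(\cAK/\cIK)$ pick representatives $F,G$ with norms within $\epsilon$ of $\|\xi\|,\|\eta\|$; then $FG$ represents $\xi\eta$ (ideal property) and $\|\xi\eta\|\le\|FG\|\le\|F\|\,\|G\|$ by Lemma \ref{lem:product}(iv), and again $\epsilon\downarrow 0$.

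The only property needing a two-sided argument is the $L^\infty$-type identity $\|\xi\oplus\eta\|_{p+\ell,q+r}=\max\{\|\xi\|_{p,q},\|\eta\|_{\ell,r}\}$. For ``$\le$'', choose representatives $F$ of $\xi$ and $G$ of $\eta$ with $\|F\|\le\|\xi\|+\epsilon$, $\|G\|\le\|\eta\|+\epsilon$; then $F\oplus G$ represents $\xi\oplus\eta$ and, since $\cAK$ is an abstract operator space, $\|F\oplus G\|=\max\{\|F\|,\|G\|\}\le\max\{\|\xi\|,\|\eta\|\}+\epsilon$. For ``$\ge$'', take any $H\in M_{p+\ell,q+r}(\cAK)$ representing $\xi\oplus\eta$; compressing $H$ to its $(p,q)$ corner, that is forming $\begin{pmatrix}I_p & 0\end{pmatrix}H\begin{pmatrix}I_q\\ 0\end{pmatrix}$, yields a representative of $\xi$ whose norm is at most $\|H\|$ by the matrix-normed property, so $\|\xi\|\le\|H\|$, and likewise $\|\eta\|\le\|H\|$; taking the infimum over $H$ gives $\max\{\|\xi\|,\|\eta\|\}\le\|\xi\oplus\eta\|$. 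I do not expect any genuine obstacle here: every step is a standard lifting estimate, and the one point that actually uses a hypothesis — closedness of $\cIK$, needed so that the quotient seminorms are norms — was recorded just before the statement. This is exactly the content of Exercises 13.3 and 16.3 in \cite{P}.
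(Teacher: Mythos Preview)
Your proposal is correct and follows exactly the approach the paper takes: the paper does not write out a proof but simply invokes the standard fact that the quotient of an abstract operator algebra by a closed two-sided ideal is again an abstract operator algebra, citing Exercises 13.3 and 16.3 of \cite{P}, and your argument is precisely the routine verification of those exercises via the ``lift a representative, estimate, let $\epsilon\to 0$'' procedure. The one substantive hypothesis, closedness of $\cIK$, is asserted immediately before the statement in the paper and is justified by your observation about boundedness of the coefficient functionals.
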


\subsection{Attainment of Norms of Classes in $M_q(\cAK) / M_q(\cIK)$}
Let $p \in M_q(\cAK)$. In this section it is shown that there exists
$f \in M_q(\cIK)$ such that
\begin{equation*}
\|p+f\| = \|p + M_q(\cIK)\| = \inf\{\|p + g\| : g \in M_q(\cIK)\}.
\end{equation*}
Let $\{f_m\}$ be a sequence in $M_q(\cIK)$ such that
\begin{equation*}
\label{eq:approximations}
\|p + M_q(\cIK)\| \le \|p + f_m\| \le \|p + M_q(\cIK)\| + \frac{1}{m}
\end{equation*}
It follows that the sequence $\{f_m\}$ is bounded and that
$\|p + f_m\| \rightarrow \|p+M_q(\cIK)\|$. An application of Lemma {\ref{lem:subsequence}}
yields a subsequence $\{f_{m_k}\}$ of $\{f_m\}$ and $f \in M_q(\cIK)$ such that
\begin{equation*}
(p+f_{m_k})(X) \rightarrow (p+f)(X)
\end{equation*}
for all $X \in \cK$.

\begin{proposition}
\label{thm:normattained}
If $p, \{{f_m}_k\}, f$ are as above, then $\|p \, + \, f\| = \|p \, + \, M_q(\cIK)\|$.
\end{proposition}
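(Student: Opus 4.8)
The plan is to exploit two facts already in hand: first, $\|p+f_{m_k}\|\to\|p+M_q(\cIK)\|$; second, by Lemma \ref{lem:subsequence}(iii), for every $X\in\cK$ the sequence $(p+f_{m_k})(X)$ converges to $(p+f)(X)$ with $\|(p+f)(X)\|\le c$ for any common bound $c$ on the $\|p+f_{m_k}\|$. These two together should pin the quotient norm from both sides.

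First I would establish the inequality $\|p+f\|\ge\|p+M_q(\cIK)\|$, which is immediate since $f\in M_q(\cIK)$, so $p+f$ is one of the competitors in the infimum defining $\|p+M_q(\cIK)\|$. The substance is the reverse inequality $\|p+f\|\le\|p+M_q(\cIK)\|$. For this, fix $X\in\cK$; since $(p+f_{m_k})(X)\to(p+f)(X)$ in operator norm, we get
\[
\|(p+f)(X)\| = \lim_{k\to\infty}\|(p+f_{m_k})(X)\| \le \liminf_{k\to\infty}\|p+f_{m_k}\| = \|p+M_q(\cIK)\|.
\]
Taking the supremum over all $X\in\cK$ yields $\|p+f\|=\sup_{X\in\cK}\|(p+f)(X)\|\le\|p+M_q(\cIK)\|$. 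Combining the two inequalities gives the claimed equality. It is worth remarking that $f\in M_q(\cIK)$ rather than merely $f\in M_q(\cAKn)$ is guaranteed by Lemma \ref{lem:subsequence}(iii) applied to the sequence $p+f_m$ (whose coefficients indexed by $w\notin\Lambda$ are exactly those of $f_m$, and these converge to the coefficients of $f$, which therefore are supported off $\Lambda$), together with the norm bound $\|p+f\|\le c<\infty$ from part (iii); the fact that $f$ itself, as opposed to $p+f$, has finite norm then follows because $p\in M_q(\cAK)$.

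The only point requiring care—and the step I expect to be the mild obstacle—is the interchange of limit and norm: one must be sure the convergence $(p+f_{m_k})(X)\to(p+f)(X)$ is genuinely in operator norm (not merely weak or strong) so that $\|\cdot\|$ passes to the limit, and that the bound $\|p+f_{m_k}\|\le\|p+M_q(\cIK)\|+\tfrac1{m_k}$ legitimately feeds into the $\liminf$. Both are supplied: the former by the norm-convergence assertion in Lemma \ref{lem:subsequence}(iii), the latter by the defining property of the approximating sequence $\{f_m\}$. No additional machinery is needed.
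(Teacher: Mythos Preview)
Your proof is correct and follows essentially the same approach as the paper: both use pointwise norm convergence $(p+f_{m_k})(X)\to(p+f)(X)$ together with $\|p+f_{m_k}\|\to\|p+M_q(\cIK)\|$ to get $\|p+f\|\le\|p+M_q(\cIK)\|$, and then invoke $f\in M_q(\cIK)$ for the reverse inequality. Your version is in fact a cleaner packaging of the same idea, replacing the paper's explicit $\epsilon/4$--$\epsilon/2$ bookkeeping with a direct sup-after-limit argument.
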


\begin{proof}
Let $\epsilon > 0$ be given. Choose $R \in \cK$ such that
\begin{equation}
\label{eq:lt1}
\|p + f\| < \|(p +f)(R)\| + \frac{\epsilon}{4}.
\end{equation}

Since $\|(p + f_{m_k})(R)\| \rightarrow \|(p + f)(R)\|$, there exists $K_1 \in \N$
such that,
\begin{equation}
\label{eq:lt2}
\|(p + f)(R)\| < \|(p + f_{m_k})(R)\| + \frac{\epsilon}{4}
\end{equation}
for all $k \ge K_1$.
Combining the inequalities from equations {\ref{eq:lt1}} and
{\ref{eq:lt2}}, implies that, for all $k\geq K_1$,
\begin{equation}
\label{eq:lt3}
\|p + f\| < \|p + f_{m_k}\| + \frac{\epsilon}{2}.
\end{equation}

Since $\|p + f_{m_k}\| \rightarrow \|p + M_q(\cIK)\|$, there exists
a Natural number $K_2$
 such that for all $k \geq K_2$,
\begin{equation}
\label{eq:lt4}
\|p + f_{m_k}\| < \|p + M_q(\cIK)\| + \frac{\epsilon}{2}.
\end{equation}

Setting $k$ = $\max \{K_1,K_2\}$ in equations ({\ref{eq:lt3}}) and ({\ref{eq:lt4}}),
and letting $\epsilon \rightarrow 0$ yields

\begin{equation*}
\|p + f\| \leq \|p + M_q(\cIK)\|.
\end{equation*}

On the other hand, since $f \in M_q(\cIK)$,

\begin{equation*}
\|p + f\| \geq \|p + M_q(\cIK)\|.
\end{equation*}

\end{proof}

\section{Representations of the operator algebra $\cAK$}
\label{sec:reps}

In this section it is shown that completely
contractive representations of the algbera $\cAK$, when
compressed to finite-dimensional subspaces end up
in the boundary of the matrix-convex set $\cK$.

\subsection{Completely Contractive/Isometric Representation}

Let $\cV$ and $\cW$ be abstract operator spaces and $\phi: \cV \rightarrow \cW$ be
a linear map.
Define $\phi_q: M_q \otimes \cV \rightarrow M_q \otimes \cW$ by
$\phi_q = I_q \otimes \phi$, where $I_q$ is the $q \times q$ identity
matrix.

The map $\phi$ is said to be {\it completely contractive (isometric)}
\index{completely contractive (isometric)} if $\phi_q$ is a contraction (isometry)
 for each $q \in \N$.

A {\it completely contractive (isometric)} representation of an algebra $\cA$ is a
completely contractive (isometric) algebra homomorphism $\pi: \cA \rightarrow B(\cM)$
for some Hilbert space $\cM$.\\

The following Theorem due to Blecher, Ruan and Sinclair guarantees the
existence of a completely isometric Hilbert space
representation for an abstract unital
operator algebra.

\begin{theorem}[BRS]
\label{thm:BRS}
Let $\cA$ be an abstract unital operator algebra. There exists a Hilbert space $\cM$
and a unital completely isometric algebra homomorphism $\pi: \cA \rightarrow B(\cM)$,
i.e. a unital operator algebra isomorphism onto $\pi(\cA)$.
\end{theorem}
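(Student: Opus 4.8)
This is the Blecher--Ruan--Sinclair theorem, so rather than reproduce the full argument I will indicate the standard route, which assembles $\pi$ from two external inputs: Ruan's representation theorem for abstract operator spaces, and the factorization property of the Haagerup tensor product (equivalently, the representation theorem for completely contractive bilinear maps), with the unit of $\cA$ invoked at the end to promote a bilinear factorization to an honest homomorphism.

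First I would apply Ruan's theorem to the operator space underlying $\cA$, obtaining a completely isometric linear embedding $\iota\colon\cA\to B(\cH_0)$ for some Hilbert space $\cH_0$. The defining property of an abstract operator algebra --- that multiplication is completely contractive --- says precisely that the bilinear map $\cA\times\cA\to B(\cH_0)$, $(a,b)\mapsto\iota(ab)$, is jointly completely contractive, equivalently that its linearization $\cA\otimes_h\cA\to B(\cH_0)$ is a complete contraction. The representation theorem for such maps then furnishes a Hilbert space $\cK$ together with complete contractions $\sigma\colon\cA\to B(\cK,\cH_0)$ and $\tau\colon\cA\to B(\cH_0,\cK)$ satisfying $\iota(ab)=\sigma(a)\tau(b)$ for all $a,b\in\cA$; putting $b=e$ and $a=e$ in turn also gives $\iota(a)=\sigma(a)\tau(e)=\sigma(e)\tau(a)$.

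The crux --- and the step I expect to be the main obstacle --- is to convert the factorization $\iota(ab)=\sigma(a)\tau(b)$ into a genuine \emph{unital} algebra homomorphism. Here one iterates the factorization on the $n$-fold products $a_1\cdots a_n$, which the multilinear version of the representation theorem factors through a chain of intermediate Hilbert spaces, and then knits these chains together --- using the unit $e$ (which has norm one) to match the spaces up coherently --- so that at the inductive limit one obtains a Hilbert space $\cM$, a unital homomorphism $\pi\colon\cA\to B(\cM)$ built entirely from complete contractions (hence completely contractive), and an isometry $v\colon\cH_0\to\cM$ with $v^*\pi(a)v=\iota(a)$ for every $a\in\cA$; that is, $\iota$ is recovered as a compression of $\pi$. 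Carrying out this knitting carefully is the only substantial part of the argument.

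Finally, the complete isometry of $\pi$ is then automatic: for $[a_{ij}]\in M_p(\cA)$, conjugating $\pi^{(p)}([a_{ij}])$ by $I_p\otimes v$ returns $[\iota(a_{ij})]\in M_p(B(\cH_0))$, whose norm equals $\|[a_{ij}]\|_{M_p(\cA)}$ since $\iota$ is completely isometric; hence $\|\pi^{(p)}([a_{ij}])\|\ge\|[a_{ij}]\|$, while the complete contractivity of $\pi$ gives the reverse inequality. Therefore $\pi$ is a unital, completely isometric --- in particular injective --- algebra homomorphism, i.e. a unital operator algebra isomorphism of $\cA$ onto $\pi(\cA)\subseteq B(\cM)$. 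All the real work sits in the inductive-limit construction of $\pi$; the remainder is bookkeeping with Ruan's theorem and Haagerup factorization.
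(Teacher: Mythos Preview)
The paper does not prove this theorem at all: Theorem~\ref{thm:BRS} is stated without proof as the well-known Blecher--Ruan--Sinclair characterization, with a pointer to Paulsen's book \cite{P}, and is then used as a black box in the proof of Proposition~\ref{thm:matrixversion}. So there is nothing in the paper to compare your argument against.

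That said, your outline is the standard route to BRS and is correct in broad strokes: Ruan's theorem gives a completely isometric linear embedding, the complete contractivity of multiplication yields a complete contraction on the Haagerup tensor product, and the Christensen--Sinclair representation theorem for completely bounded multilinear maps provides the factorizations which, organized via the unit into an inductive-limit (``chain of Hilbert spaces'') construction, produce a unital completely contractive homomorphism $\pi$ admitting the original embedding as a compression; complete isometry of $\pi$ then follows exactly as you say. The one place where your sketch is genuinely thin is the inductive-limit step: making the intermediate Hilbert spaces and bridging maps cohere requires some care (this is where $\|e\|=1$ is used nontrivially), and a reader who has not seen the argument would not be able to reconstruct it from what you wrote. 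If you intend this as more than a pointer to the literature, that step needs to be fleshed out; otherwise, citing \cite{P} as the paper does is the honest move.
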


\subsection{Completely Contractive Representations of $\cAK$}
 Given a completely contractive unital
 representation  $\pi:\cAK\to\mathcal B(\cM),$
 let $T_j=\pi(g_j)$ and let $T=(T_1,\dots,T_d)$.
 For notation convenience,
 we will write $\pi_T$ for $\pi$.
 Further, we will also use  $\pi_T$ to denote the map
 $I_q \otimes \pi: M_q(\cAK) \to M_q\otimes B(\cM)$.

 A main result of this
 section says, for a completely contractive representation $\pi_T$
  of $\cAK$,
  for any $n \in \N$ and finite dimensional subspace $\cH$ of $\cM$
  of dimension $n$
  and $0\le t<1$ the tuple
\[
   tZ=tV^*T V=(tV^*T_1V,\dots,tV^*T_dV)
\]
  is in $\mathcal K(n)$.
 The proof begins with a couple of lemmas.
 Given $f\in M_q(\cAK)$ and $0\le r<1$, let $f_r$ be
 defined as follows.
If
\begin{equation}
 \label{eq:formf}
   f=\sum_{j=0}^{\infty} \sum_{|w|=j} f_w w =\sum_{j=0}^{\infty} f_j.
\end{equation}
 then
\[
  f_r=\sum_{j=0}^{\infty} r^j \sum_{|w|=j} f_w w = \sum_{j=0}^{\infty} r^j f_j.
\]

\begin{lemma}
\label{lem:convergence}
  If $\pi_T$ is a completely contractive representation
  of $\cAK$ and $f\in M_q(\cAK)$, then $f_r(T)$ converges
  in operator norm. Moreover $\pi_{T}(f_r) = f_r(T)$ and
  $\|f_r(T)\| \le \|f_r\|\le \|f\|$.

  If in addition $\pi_T$ is completely isometric, then
  $\lim_{r\to 1^-} \|f_r(T)\|=\|f\|$.
\end{lemma}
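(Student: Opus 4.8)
The plan is to reduce everything to polynomials, where $\pi_T$ is literal evaluation at $T$, and then transfer the conclusion to $f_r$ by a uniform norm approximation together with complete contractivity. The first observation is that $f_r$ is $f$ composed with a dilation: since each $f_j$ is homogeneous of degree $j$, for every $X\in\cK$ one has $f_r(X)=\sum_j r^j f_j(X)=\sum_j f_j(rX)=f(rX)$. Because $\cK(n)$ is convex and contains $0$ (it contains $N_\gamma$), the tuple $rX$ lies in $\cK(n)$ whenever $X\in\cK(n)$ and $0\le r<1$; hence $\|f_r(X)\|=\|f(rX)\|\le\|f\|$, which already yields $\|f_r\|\le\|f\|$.

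The crux is a polynomial approximation of $f_r$ in the norm of $M_q(\cAK)$. Fix $r'$ with $r<r'<1$ and set $s_N=\sum_{j=0}^N r^j f_j\in M_q(\Cg)$. For $X\in\cK(n)$ the tuple $r'X$ is again in $\cK(n)$ and satisfies $\tfrac1{r'}(r'X)=X\in\cK(n)$ with $\tfrac1{r'}>1$; applying Lemma~\ref{lem:A-bounded} to the tuple $r'X$ with radius $1$ gives $\|f_j(r'X)\|\le\|f\|$, a bound uniform over all $X\in\cK$ and all $n$. Since $f_j(r'X)=r'^{\,j}f_j(X)$, this gives $\|r^j f_j(X)\|\le(r/r')^j\|f\|$, and summing the geometric tail,
\[
  \|f_r-s_N\| \;=\; \sup_{X\in\cK}\Bigl\|\sum_{j>N} r^j f_j(X)\Bigr\| \;\le\; \frac{(r/r')^{N+1}}{1-r/r'}\,\|f\| \;\longrightarrow\; 0 .
\]

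Now the transfer. Each $s_N$ is a matrix polynomial, so $\pi_T(s_N)=s_N(T)=\sum_{j=0}^N r^j f_j(T)$ because $\pi_T$ is a unital algebra homomorphism with $\pi_T(g_j)=T_j$. Complete contractivity of $\pi_T$ on $M_q(\cAK)$ gives $\|\pi_T(f_r)-\pi_T(s_N)\|\le\|f_r-s_N\|\to 0$, so the partial sums $\sum_{j=0}^N r^j f_j(T)$ converge in operator norm; i.e.\ $f_r(T)$ converges and equals $\pi_T(f_r)$, with $\|f_r(T)\|=\|\pi_T(f_r)\|\le\|f_r\|\le\|f\|$. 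This settles the first three assertions. If in addition $\pi_T$ is completely isometric, then $\|f_r(T)\|=\|\pi_T(f_r)\|=\|f_r\|$, so it suffices to prove $\|f_r\|\to\|f\|$ as $r\to1^-$. Given $\epsilon>0$, pick $X\in\cK$ with $\|f(X)\|>\|f\|-\epsilon$; as observed in the proof of Lemma~\ref{lem:A-bounded}, $z\mapsto f(zX)$ is holomorphic on a neighborhood of $\overline{\D}$, hence $\|f(rX)\|\to\|f(X)\|$, so $\|f_r\|\ge\|f(rX)\|>\|f\|-\epsilon$ for $r$ near $1$; combined with $\|f_r\|\le\|f\|$ this gives the limit.

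The one step requiring genuine care is the uniformity in the estimate $\|f_r-s_N\|\to 0$: the constant $\rho<1$ produced directly by Lemma~\ref{lem:A-bounded} depends a priori on $X$, and the intermediate radius $r'\in(r,1)$ is precisely the device that converts it into a bound on $\|f_j(X)\|$ valid simultaneously for all $X\in\cK$. Once one has $s_N\to f_r$ in the operator-algebra norm, complete contractivity of $\pi_T$ does everything else mechanically.
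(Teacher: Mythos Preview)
Your proof is correct and follows essentially the same route as the paper: both show that the polynomial partial sums $s_N$ converge to $f_r$ in the norm of $M_q(\cAK)$ and then transfer to $T$ via complete contractivity of $\pi_T$. The only difference is that your intermediate radius $r'$ is an unnecessary detour: since $\cK(n)$ is open and $X\in\cK(n)$, one has $\sup\{s:sX\in\cK(n)\}>1$, so taking $r=1$ directly in Lemma~\ref{lem:A-bounded} already gives $\|f_j(X)\|\le\|f\|$ uniformly in $X$ (i.e.\ $\|f_j\|\le\|f\|$), and hence $\|f_r-s_N\|\le\sum_{j>N}r^j\|f\|\to 0$ without any auxiliary device.
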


\begin{proof}
 Write $f$ as in equation \eqref{eq:formf}.
 Lemma {\ref{lem:A-bounded}} implies that $\|f_j\|\le\|f\|$.
 Because $\pi_{T}$ is completely contractive $\|f_j(T)\|\le \|f\|$.
 It follows that $f_r(T)$ converges in norm.
 Since also the partial sums of $f_r$ converge
 (to $f_r$) in the norm of $M_q(\cAK)$, it follows that
 $\pi_{T}(f_r) = f_r(T)$ and so $\|f_r(T)\|\le \|f_r\|$.

 The inequality $\|f_r\|\le \|f\|$ is straightforward
 because $r\cK\subset \cK$.

 Now suppose that $\pi_T$ is completely isometric.
 In this case $\|f_r(T)\|=\|f_r\|$.
 On the other hand $  \lim_{r\to 1^-} \|f_r\|=\|f\|$.
\end{proof}

\begin{lemma}
 \label{lem:transform}
  Given  $A_1,\dots,A_d$ are $k\times k$ matrices.
  let
 \[
   L=\sum_{j=1}^d A_j g_j.
 \]

  Suppose
 \[
   2 - L(X)-L(X)^* \succ 0
 \]
 for all $X \in \mathcal{K}(\ell)$ and for all $\ell \in \N$.
  Let $\Phi_L$ denote the formal power series,
 \[
   \Phi_L= L(2-L)^{-1} = \displaystyle\sum_{j=0}^\infty
   \frac{L^{j+1}}{2^{j+1}}.
 \]

  (a) If $X \in \mathcal{K}(\ell)$, then
  $\Phi_L(X)$ converges in norm; i.e., the
  series
\[
    \displaystyle\sum_{j=0}^\infty
   \frac{L(X)^{j+1}}{2^{j+1}}
\] converges.

  (b) $\|\Phi_L(X)\| < 1$ and hence $\Phi_L$
  is in $M_k(\cAK)$ and has norm at most one.

   (c) If $\pi_T$ is a completely contractive representation
   of $\cAK$,
  then $2-(L(T)+L(T)^*)\succeq 0$.
\end{lemma}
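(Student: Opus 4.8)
\textbf{Proof plan for Lemma \ref{lem:transform}.}

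The plan is to prove parts (a) and (b) together by a direct estimate on the operator $L(X)$, and then to deduce (c) by pushing the operator inequality of (b) through the representation $\pi_T$ and a limiting argument.

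For (a) and (b): fix $X \in \cK(\ell)$ and write $Y = L(X) = \sum_{j=1}^d A_j \otimes X_j$, an operator on $\C^k \otimes \C^\ell$. The hypothesis $2 - L(X) - L(X)^* \succ 0$ for every $X \in \cK$, applied (using that $\cK$ is matrix convex, hence closed under unitary conjugation, and circled) to $e^{i\theta}X$ for all $\theta$, gives $2 - e^{i\theta}Y - e^{-i\theta}Y^* \succ 0$ for all real $\theta$, i.e. $\real(e^{i\theta}Y) \prec I$ for all $\theta$. First I would use this to show that the Cayley-type transform $\Phi := Y(2-Y)^{-1}$ is well defined (the numerical range of $Y$ avoids $2$ since $\real Y \prec I$, so $2-Y$ is invertible) and is a strict contraction: this is the standard fact that $\real Y \prec I$ implies $\|Y(2I-Y)^{-1}\| < 1$, which one can see by noting $I - \Phi^*\Phi = (2-Y^*)^{-1}\bigl(4 - 2Y - 2Y^* \bigr)(2-Y)^{-1} \cdot$ (a positive factor) — more precisely $I - \Phi^*\Phi = (2-Y)^{-*}\,\bigl(4I - 2Y - 2Y^*\bigr)\,(2-Y)^{-1}$, wait, let me just record that a short computation gives $I-\Phi^*\Phi = 2(2-Y)^{-*}(2I - Y - Y^*)(2-Y)^{-1} \succ 0$. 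Hence $\|\Phi_L(X)\| = \|\Phi\| < 1$. For the convergence of the series $\sum_{j\ge 0} Y^{j+1}/2^{j+1}$ to $\Phi$, I would invoke that for $t < \gamma$ the scaled creation tuple $tS(\ell)$ lies in $\cK$ (as established in Subsection \ref{subsec:genesis}), and more generally that $\cK$ contains the neighbourhood $N_\gamma$; combined with the estimate of Lemma \ref{lem:A-bounded} this shows the homogeneous pieces $L^{j+1}/2^{j+1}$ have norms decaying geometrically when evaluated at any $X \in \cK$, because one can first apply the bound on a slightly dilated tuple $rX \in \cK$ with $r>1$. This simultaneously shows $\Phi_L \in M_k(\cAK)$ with $\|\Phi_L\| = \sup_{X\in\cK}\|\Phi_L(X)\| \le 1$.

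For (c): let $\pi_T$ be a completely contractive representation, and let $(\Phi_L)_r$ be the dilate defined before Lemma \ref{lem:convergence}. By Lemma \ref{lem:convergence}, $(\Phi_L)_r(T) = \pi_T((\Phi_L)_r)$ and $\|(\Phi_L)_r(T)\| \le \|\Phi_L\| \le 1$. Since $(\Phi_L)_r$ corresponds to the series $\sum_j r^{j+1} L^{j+1}/2^{j+1}$, which is the Cayley transform $L_r(2-L_r)^{-1}$ with $L_r := rL$, one checks that $(\Phi_L)_r(T) = L_r(T)\bigl(2 - L_r(T)\bigr)^{-1} = rL(T)\bigl(2 - rL(T)\bigr)^{-1}$; in particular $2 - rL(T)$ is invertible and the contractivity $\|rL(T)(2-rL(T))^{-1}\| \le 1$ inverts (by the same Cayley computation run backwards) to $2 - rL(T) - rL(T)^* \succeq 0$ for every $r < 1$. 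Letting $r \to 1^-$ gives $2 - L(T) - L(T)^* \succeq 0$, which is the claim. One subtlety here is that $L(T)$ need not a priori be a bounded operator; but $g_j \in \cAK$, so $L = \sum A_j g_j \in M_k(\cAK)$ and $L(T) = \pi_T(L)$ is bounded, which resolves this.

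The main obstacle I anticipate is part (a) — namely, justifying that the \emph{series} $\sum_{j} L(X)^{j+1}/2^{j+1}$ converges in norm for every $X \in \cK$, not merely that the closed-form expression $L(X)(2-L(X))^{-1}$ makes sense. The closed form is immediate from $\real(L(X)) \prec I$, but identifying it with the given power series requires a genuine geometric-decay estimate on $\|L(X)^{j}\|$, which does not follow from $\|L(X)\| < 2$ alone (that would only give decay of $\|L(X)\|^j/2^j$ if $\|L(X)\| < 2$, and in fact $\real L(X) \prec I$ does force the spectral radius of $L(X)$ to be $< 2$, but one needs a norm estimate, not just spectral radius). The clean way around this is the dilation trick: since $\cK$ is open and $X \in \cK(\ell)$, there is $r > 1$ with $rX \in \cK(\ell)$; apply Lemma \ref{lem:A-bounded} to the element $\Phi_L \in M_k(\cAK)$ (whose finiteness of norm we get from the Cayley computation above) at the tuple $rX$ to conclude $\|L(X)^{j+1}/2^{j+1}\| = \|(j\text{-th homogeneous part of }\Phi_L)(X)\| \le r^{-(j+1)}\|\Phi_L\|$, giving the geometric decay and hence norm convergence. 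So the logical order is: first get $\|\Phi_L(X)\| < 1$ from the Cayley estimate (pointwise, no series needed), then $\|\Phi_L\| \le 1$ hence $\Phi_L \in M_k(\cAK)$, then feed that back through Lemma \ref{lem:A-bounded} to get convergence of the defining series in (a), then (c) by the representation-plus-limit argument.
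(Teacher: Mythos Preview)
Your treatments of (b) and (c) are essentially the paper's: the identity $I-\Phi^*\Phi=2(2-Y)^{-*}(2I-Y-Y^*)(2-Y)^{-1}$ is exactly the equivalence the paper records as $\|Y(2-Y)^{-1}\|<1 \iff 2-(Y+Y^*)\succ 0$, and your argument for (c) via Lemma~\ref{lem:convergence} and the limit $r\to 1^-$ matches the paper.

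The gap is in (a), and it comes from a misconception you state explicitly. You write that spectral radius $<2$ is insufficient because ``one needs a norm estimate, not just spectral radius.'' This is wrong: for any bounded operator $Y$ the spectral radius formula $\rho(Y)=\lim_n\|Y^n\|^{1/n}$ gives $\|Y^n\|\le (\rho(Y)+\epsilon)^n$ for all large $n$, so $\rho(Y)<2$ already forces $\sum_n Y^n/2^n$ to converge in norm. That is precisely how the paper handles (a): the circled hypothesis yields $2-e^{i\theta}Y-e^{-i\theta}Y^*\succ 0$ for every $\theta$, so the numerical range of $Y=L(X)$ lies in the open unit disc; since $Y$ is a finite matrix its (compact) numerical range sits inside a disc of radius strictly less than $1$, hence $\rho(Y)<1$ and the series converges in norm. (Your parenthetical that $\real Y\prec I$ alone forces $\rho(Y)<2$ is also false --- take $Y=itI$ for large real $t$ --- so the full $\theta$-family really is needed.)

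Because you dismiss the spectral-radius route, the workaround you propose is circular: you apply Lemma~\ref{lem:A-bounded} to $\Phi_L\in M_k(\cAK)$ to extract decay of the homogeneous parts, but membership in $M_k(\cAK)$ \emph{by definition} requires the defining series $\Phi_L(X)$ to converge at every $X\in\cK$, which is exactly the conclusion you are trying to establish. Knowing that the closed form $L(X)(2-L(X))^{-1}$ exists and has norm $<1$ does not yet place the formal power series $\Phi_L$ in $M_k(\cAK)$. One could salvage your approach by rerunning the Cauchy-estimate argument from the proof of Lemma~\ref{lem:A-bounded} directly on the analytic map $z\mapsto L(zX)(2-L(zX))^{-1}$, which is well defined and bounded on a disc of radius exceeding $1$; but the paper's spectral-radius argument is the shorter and cleaner path.
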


\begin{proof}
  To prove part (a) of the lemma, let $X\in\mathcal K(\ell)$ be given.
  Because $\mathcal K(\ell)$ is circled,
  it follows that $e^{i\theta}X\in \mathcal K(\ell)$
  for each $\theta$. Hence,
\begin{equation}
 \label{eq:num1}
  2-e^{i\theta} L(X)- e^{-i\theta} L(X)^* \succ 0
\end{equation}
 for each $\theta$.
  For notation ease, let $Y=L(X)$. Thus
  $Y$ is a $k\ell\times k\ell$ matrix
  and equation \eqref{eq:num1} implies that
  the spectrum of $Y$ lies strictly within the disc;
  i.e., each eigenvalue of $Y$ has absolute value
  less than one.  Thus,
 \[
    \frac{1}{2} \displaystyle\sum_{j=0}^{\infty}
    \left(\frac{Y}{2}\right)^j = (2-Y)^{-1}
 \]
  converges in norm.  It follows that
 \[
   \Phi_L(X)=Y(2-Y)^{-1}
     = \displaystyle \sum_{j=0}^\infty
   \frac{Y^{j+1}}{2^{j+1}}
 \]
  converges.

  To prove (b) observe that
  $\|Y(2-Y)^{-1}\|< 1$ if and only if
 \[
   (2-Y)^* (2-Y) \succ Y^* Y
 \]
  which is equivalent to $2-(Y+Y^*)\succ 0$. Thus $\|\Phi_L(X)\| < 1$
  which implies that $\Phi_L \in M_k(\cAK)$ with $\|\Phi_L\|\le 1$.
 This completes the proof of (b).

 To prove part (c), observe,
 Since $\pi_T$ is completely contractive and
 $\Phi_L\in M_k(\cAK)$ with norm at most one,
 an application of Lemma {\ref{lem:convergence}} yields,
 $\| \Phi_L(rT) \|\le 1$. Arguing
 as in the proof of part (b), it follows that
 $2-(L(rT)+L(rT)^*) \succeq 0$. This inequality
  holds for all $0\le r <1$ and thus the conclusion
 of part (c) follows.
\end{proof}

\begin{proposition}
\label{prop:cutdown}
   If $T=(T_1,\dots,T_d),$
   and $T_j\in\mathcal B(\cM)$ for
  some Hilbert space $\cM,$
   and $\pi(g_j)=T_j$ determines
  a completely contractive representation
  of $\cAK$, then, for each positive
  integer  $n$ and finite
  dimensional subspace $\cH$ of $\cM$
  of dimension $n$ and each $0\le t<1$
  the tuple
\[
   tZ=tV^*T V=(tV^*T_1V,\dots,tV^*T_dV)
\]
 is in $\mathcal K(n)$, where
  $V:\cH\to\cM$ is the inclusion map.
\end{proposition}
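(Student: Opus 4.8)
The plan is to show that $tZ$ fails to lie outside $\cK(n)$ by contradiction, using the fact that $\cK(n)$ is a bounded, open, convex (hence closed-convex-hull-describable) set to produce a separating linear functional, and then to package that functional as a linear pencil $L$ to which Lemma \ref{lem:transform} applies. Suppose $tZ\notin\cK(n)$ for some $0\le t<1$, some $n$, and some $n$-dimensional subspace $\cH$. Since $\cK(n)$ is open and convex in $M_n(\C^d)$, the Hahn–Banach separation theorem yields a real-linear functional $\lambda$ on $M_n(\C^d)$ and a real constant $c$ with $\real\,\lambda(X)<c$ for all $X\in\cK(n)$ but $\real\,\lambda(tZ)\ge c$. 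Because $\cK(n)$ is circled (contains $e^{i\theta}X$ whenever it contains $X$) and contains $0$, one can arrange $c=1$ and rewrite the separating inequality in the symmetric form $2-\ell(X)-\overline{\ell(X)}>0$ for all $X\in\cK(n)$, where $\ell$ is now a complex-linear functional on $M_n(\C^d)$.

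Next I would represent $\ell$ concretely. Every complex-linear functional on $M_n(\C^d)$ has the form $\ell(X)=\sum_{j=1}^d \operatorname{tr}(B_j X_j)$ for suitable $n\times n$ matrices $B_j$; absorbing things appropriately, this can be recast so that there exist $k\times k$ matrices $A_1,\dots,A_d$ (with $k$ a multiple of $n$, coming from the trace pairing/ampliation) and a unit vector $\xi$ with $\ell(X)=\langle L(X)\xi,\xi\rangle$, where $L=\sum_{j=1}^d A_j g_j$ is the linear pencil of Lemma \ref{lem:transform}. The key point is that the separation inequality, valid on all of $\cK(n)$, together with the matrix convexity of $\cK$ (closure under direct sums and isometric conjugation), upgrades to $2-L(X)-L(X)^*\succ 0$ for \emph{every} $X\in\cK(\ell)$ and every $\ell$: indeed testing the scalar inequality on all compressions $\alpha^*X\alpha$ and all direct sums recovers positive-definiteness of the matrix $2-L(X)-L(X)^*$. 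This is exactly the hypothesis of Lemma \ref{lem:transform}, so $\Phi_L=L(2-L)^{-1}\in M_k(\cAK)$ with $\|\Phi_L\|\le 1$.

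Now apply the completely contractive representation $\pi_T$. By Lemma \ref{lem:transform}(c), $2-(L(T)+L(T)^*)\succeq 0$ as an operator on $\C^k\otimes\cM$. Compressing to $\C^k\otimes\cH$ and using $V^*T_jV=Z_j$, we get $2-(L(Z)+L(Z)^*)\succeq 0$, hence $2-(L(tZ)+L(tZ)^*)=2-t(L(Z)+L(Z)^*)\succeq 2-(L(Z)+L(Z)^*)$ shifted... more cleanly, $2-tL(Z)-tL(Z)^*\succeq (1-t)\cdot 2\cdot I + t(2-L(Z)-L(Z)^*)\succ 0$ since $t<1$, so in particular $2-\ell(tZ)-\overline{\ell(tZ)}>0$, i.e. $\real\,\ell(tZ)<1$. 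This contradicts $\real\,\ell(tZ)\ge 1$, and the proposition follows.

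The main obstacle I anticipate is the second paragraph: correctly converting the Hahn–Banach functional on the single matrix level $\cK(n)$ into a linear pencil $L$ whose defining inequality holds at \emph{all} levels $\cK(\ell)$. Getting the bookkeeping right — which Hilbert space the pencil acts on, how the matrix convexity (direct sums plus isometric conjugation) is used to promote a scalar separating inequality at level $n$ to an operator inequality at every level — is the delicate step; the applications of Lemma \ref{lem:transform} and of complete contractivity in the third and fourth paragraphs are then essentially formal.
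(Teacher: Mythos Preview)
Your overall strategy is the same as the paper's: apply Lemma \ref{lem:transform}(c) to obtain $2-(L(T)+L(T)^*)\succeq 0$, compress by $I_k\otimes V$ to get $2-(L(Z)+L(Z)^*)\succeq 0$, and then use a separation characterization of the matrix convex set $\cK$ to conclude $Z\in\overline{\cK(n)}$, hence $tZ\in\cK(n)$ for $0\le t<1$.

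The genuine gap is precisely the one you flagged in your second paragraph. Writing the scalar separating functional as $\ell(X)=\langle L(X)\xi,\xi\rangle$ for a \emph{fixed} vector $\xi$, matrix convexity of $\cK$ only yields $\real\langle L(X)\eta,\eta\rangle<1$ for vectors of the special form $\eta=(I_k\otimes\alpha)\xi$ with $\alpha$ an isometry into $\C^\ell$. Such $\eta$ do not exhaust the unit sphere of $\C^k\otimes\C^\ell$, so you cannot conclude $2-L(X)-L(X)^*\succ 0$ at all levels from a scalar Hahn--Banach separation at level $n$. Promoting a scalar separating functional to a matrix-valued pencil that works at every level is exactly the content of the Effros--Winkler matrix bipolar theorem (Theorem~5.4 of \cite{EW}), and it is not mere bookkeeping.

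The paper sidesteps this difficulty by running the argument forward and citing \cite{EW} directly: it shows that $2-(L(Z)+L(Z)^*)\succeq 0$ holds for \emph{every} pencil $L$ satisfying the hypothesis of Lemma \ref{lem:transform}, and then invokes Theorem~5.4 of \cite{EW} to conclude $Z\in\overline{\cK(n)}$. If you replace your second paragraph with that citation, your proof and the paper's become essentially identical.
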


\begin{proof}
  Let $n$ and $\cH$ be given and define
  $Z$ as in the statement of the proposition.
  Suppose that $L$ is as in the statement of
  Lemma {\ref{lem:transform}}.
  From part (c) of the previous lemma,
  it follows that $2-(L(T)+L(T)^*)\succeq 0$.
  Applying $I_k \otimes V^*$ on the left and $I_k \otimes V$ on the right
  of this inequality gives,
 \[
  2-(L(Z)+L(Z)^*) =(I_k \otimes V^*)(2-(L(T)+L(T)^*)(I_k \otimes V) \succeq 0.
 \]
 An application of Theorem 5.4 from \cite{EW} implies that
 $Z \in \overline{\cK(n)}$. Hence
 $tZ \in \cK(n)$ for all $0 \le t <1$.
\end{proof}

\begin{lemma}
\label{lem:normbound}
Let $\Lambda \subset \cFd$ be a finite initial segment,
$f \in M_q(\cAK)$ be as in equation ({\ref{eq:formf}}) and
suppose that $\pi_T$ is a completely contractive
representation of $\cAK$ into $B(\cM)$ and
$T$ is $\Lambda-\mbox{nilpotent}$.
Then $\|f_r(T)\| \leq \sup\{\|f(X)\|: X \in \cK, X \mbox{ is } \Lambda-\mbox{nilpotent}\}$
for all $0 \le r <1$. Moreover if $f_w = 0$ for all $w \not \in \Lambda$, then
$\|f(T)\| \leq \sup\{\|f(X)\|: X \in \cK, X \mbox{ is } \Lambda-\mbox{nilpotent}\}$.
\end{lemma}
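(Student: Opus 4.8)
The plan is to bound $\|f_r(T)\|$ by an evaluation of $f$ at a single $\Lambda$-nilpotent tuple lying in $\cK$, obtained by compressing $T$ to a finite-dimensional invariant subspace of $\cM$. First I would record two elementary facts: because $T$ is $\Lambda$-nilpotent, $f_r(T) = \sum_{w \in \Lambda} r^{|w|} f_w \otimes T^w$ is a finite sum; and for any $\Lambda$-nilpotent $X \in \cK$ one has $f(X) = \sum_{w \in \Lambda} f_w \otimes X^w$, so the supremum on the right-hand side of the claimed inequality depends only on the coefficients $(f_w)_{w \in \Lambda}$.

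Next, fix a unit vector $\eta \in \C^q \otimes \cM$, let $\cH_1 \subseteq \cM$ be the (at most $q$-dimensional) span of the $\cM$-components of $\eta$, and let $\cH \subseteq \cM$ be the linear span of the vectors $T^v \xi$ with $v \in \cFd$ and $\xi \in \cH_1$. The crucial point is that $\Lambda$-nilpotency makes $\cH$ manageable: since $T^v = 0$ for $v \notin \Lambda$, the space $\cH$ is already spanned by the finitely many vectors $T^v\xi$ with $v \in \Lambda$, hence is finite-dimensional, say of dimension $n$; and $\cH$ is invariant under each $T_j$, because for a spanning vector $T^v\xi$ the vector $T_j T^v \xi = T^{g_j v}\xi$ is either another spanning vector (if $g_j v \in \Lambda$) or zero (if $g_j v \notin \Lambda$). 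Let $V : \cH \to \cM$ be the inclusion and $Z = V^*TV$ the restriction of $T$ to $\cH$. Invariance gives $Z^w = V^* T^w V$ for every word $w$, so $Z$, and therefore $rZ$, is again $\Lambda$-nilpotent; and by Proposition \ref{prop:cutdown}, $rZ \in \cK(n)$ for every $0 \le r < 1$.

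Now I would put the pieces together. Since $\eta \in \C^q \otimes \cH$ and $\cH$ is $T$-invariant, the operator $f_r(T)$ maps $\C^q \otimes \cH$ into itself, and on that subspace it agrees with $\sum_{w \in \Lambda} r^{|w|} f_w \otimes Z^w$, which is exactly $f(rZ)$ because $rZ$ is $\Lambda$-nilpotent. Hence $\|f_r(T)\eta\| = \|f(rZ)\eta\| \le \|f(rZ)\| \le \sup\{\|f(X)\| : X\in\cK,\; X \text{ is }\Lambda\text{-nilpotent}\}$, and taking the supremum over all unit vectors $\eta$ proves the first assertion. For the second assertion, if $f_w = 0$ for all $w \notin \Lambda$ then $f$ is a polynomial, so $f_r(T) = \sum_{w \in \Lambda} r^{|w|} f_w \otimes T^w$ converges to $f(T)$ in operator norm as $r \uparrow 1$; letting $r \uparrow 1$ in the inequality just proved gives $\|f(T)\| \le \sup\{\|f(X)\| : X\in\cK,\; X \text{ is }\Lambda\text{-nilpotent}\}$.

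The only step I expect to require genuine care is the construction of $\cH$: one needs it to be simultaneously finite-dimensional (which is precisely where the finiteness of $\Lambda$ and the $\Lambda$-nilpotency of $T$ are used), invariant for $T$ (so that compression to $\cH$ is multiplicative and $f_r(T)$ restricts to an evaluation of $f$), and such that the compressed tuple $Z$ is still $\Lambda$-nilpotent, so that Proposition \ref{prop:cutdown} can be applied to place $rZ$ back inside $\cK(n)$.
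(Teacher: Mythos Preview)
Your proof is correct and follows essentially the same approach as the paper: compress $T$ to the finite-dimensional subspace generated by applying the words in $\Lambda$ to the components of a unit vector, observe that this subspace is $T$-invariant so that the compression $Z$ satisfies $Z^w=V^*T^wV$ and is $\Lambda$-nilpotent, apply Proposition~\ref{prop:cutdown} to get $rZ\in\cK$, and identify $f_r(T)$ on that subspace with $f(rZ)$. The only cosmetic difference is that the paper uses an $\epsilon$-argument in place of your supremum over unit vectors, and is slightly terser about why $\cH$ is $T$-invariant.
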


\begin{proof}
Since $\Lambda$ is finite, $f_r(T) =
\sum_{w \in \Lambda} f_w \otimes (rT)^w$.
Let $\{e_j\}_{j=1}^q$ be the standard basis of $\C^q$ and
 $y =  \sum_{j = 1}^q e_j \otimes h_j \in \C^q \otimes \cM$
 be a unit vector such that
\[
\|f_r(T)\| < \|f_r(T)y\| + \epsilon.
\]

Let $\cH$ denote the finite-dimensional subspace of $\cM$ spanned by the vectors
$\{T^w(h_j) : w \in \Lambda, 1 \le j \le q\}$ and $V: \cH \rightarrow \cM$
be the inclusion map. Then $Z = V^*TV$ is $\Lambda$-nilpotent and
\[
Z^w = \begin{cases} V^*T^wV & \text{if }w \in \Lambda\\
                                       0 & \text{otherwise. }
     \end{cases}
\]
Proposition {\ref{prop:cutdown}} implies that $rZ \in \cK$. Thus,
\begin{align*}
\|f_r(T)\| &< \|  \left(\sum_{w \in \Lambda} f_w \otimes
            r^{|w|} T^w\right)y \| + \epsilon\\
           &= \|f_r(Z)y\| + \epsilon\\
           &\le \|f_r(Z)\| + \epsilon\\
           &\le \sup\{\|f(X)\|: X \in \cK, X \mbox{ is }
           \Lambda-\mbox{nilpotent}\} + \epsilon.
\end{align*}
Letting $\epsilon \rightarrow 0$ yields the desired inequality.
If $f_w = 0$ for all $w \not \in \Lambda$, then $f =
\sum_{w \in \Lambda} f_w w$
is a non-commutative polynomial in which case we have
$ \lim_{r \rightarrow 1^-} \|f_r(T)\| = \|f(T)\|$
and this completes the proof.
\end{proof}

\section{The Caratheodory-Fejer Interpolation Problem (CFP)}
 \label{sec:opversion}
 The proof of the generalization of Theorem \ref{thm:main}
 allowing for operator coefficients is proved in
 this section.

 The strategy is to first prove
  the result for matrix coefficients.
 This is done in Subsection \ref{subsec:matrix-version}
 below.  Passing from matrix to operator
 coefficients is then accomplished using
 well-known facts about the Weak Operator Topology (WOT)
  and the Strong Operator Topology (SOT) on the space
 of bounded operators on a separable Hilbert space.
 The details are in Subsection \ref{subsec:operator-version}.

\subsection{Matrix version of the CFP}
 \label{subsec:matrix-version}
Fix $\Lambda \subset \cFd,$ a finite initial segment,
and polynomial
$p =  \sum_{w \in \Lambda} p_w w \in M_q(\cAK)$.

Theorem \ref{thm:main} is easily seen to follow
from the following proposition.

\begin{proposition}
\label{thm:matrixversion}
There exists $f \in M_q(\cAK)$ such that $\|p + f\| =
\|p + M_q(\cIK)\|  = \sup\{\|p(X)\|: X \in \cK, X \mbox{ is }
\Lambda-\mbox{nilpotent}\}$.
\end{proposition}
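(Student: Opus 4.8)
The plan is to combine the norm-attainment result (Proposition \ref{thm:normattained}) with the BRS theorem and the boundary analysis of Section \ref{sec:reps}. First I would set
\[
  N=\sup\{\|p(X)\|: X\in\cK,\ X\ \Lambda\text{-nilpotent}\}
\]
and let $q=\|p+M_q(\cIK)\|$ be the quotient norm. By Proposition \ref{thm:normattained} there is $f\in M_q(\cIK)$ with $\|p+f\|=q$, so it suffices to prove $N=q$. The inequality $q\le N$ is the substantive direction and the inequality $N\le q$ is a quick consequence of Lemma \ref{lem:normbound}; I would dispose of the latter first.

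For $N\le q$: let $X\in\cK$ be $\Lambda$-nilpotent, say $X\in\cK(n)$. The creation-operator picture lets me build a completely contractive representation evaluating at $X$ — more precisely, since $tS(\ell)\in\cN_\gamma$ for $t<\gamma$ and $\cN_\gamma\subseteq\cK$, point evaluations at tuples in $\cK$ are completely contractive on $\cAK$ by definition of the norm, so $\|h(X)\|\le\|h\|$ for every $h\in M_q(\cAK)$. Because $X$ is $\Lambda$-nilpotent and $f\in M_q(\cIK)$ has $f_w=0$ for $w\in\Lambda$, we get $(p+f)(X)=p(X)$, hence $\|p(X)\|=\|(p+f)(X)\|\le\|p+f\|=q$. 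Taking the supremum over all such $X$ gives $N\le q$.

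The main work is $q\le N$. Here I would invoke the BRS theorem (Theorem \ref{thm:BRS}) applied to the abstract unital operator algebra $\cAK/\cIK$ (Theorem \ref{thm:quotient-op-alg}): there is a Hilbert space $\cM$ and a completely isometric unital representation $\pi$ of $\cAK/\cIK$ on $B(\cM)$. Composing with the quotient map gives a completely contractive unital representation $\pi_T:\cAK\to B(\cM)$, with $T_j=\pi_T(g_j)$, whose kernel contains $\cIK$; in particular $T$ is $\Lambda$-nilpotent because $g_w\in\cIK$ for $w\notin\Lambda$. Complete isometry on the quotient gives, at the matrix level, $\|\pi_T(p)\|=\|p+M_q(\cIK)\|=q$ (using the identification $M_q(\cAK/\cIK)\cong M_q(\cAK)/M_q(\cIK)$). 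Since $p$ is a polynomial supported in $\Lambda$, Lemma \ref{lem:normbound} (the ``moreover'' clause, applied to $p\in M_q(\cAK)$ with $p_w=0$ for $w\notin\Lambda$) yields
\[
  \|p(T)\|\le N.
\]
But $\pi_T(p)=p(T)$ since $p$ is a polynomial (and $\pi_T$ is an algebra homomorphism sending $g_j\mapsto T_j$), so $q=\|\pi_T(p)\|=\|p(T)\|\le N$. Combining with the reverse inequality, $q=N$, and with the $f$ from Proposition \ref{thm:normattained} this is exactly the claim.

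The step I expect to be the main obstacle — or at least the one requiring the most care — is verifying that the representation $\pi_T$ produced by BRS is genuinely $\Lambda$-nilpotent \emph{and} that $\pi_T(p)$ literally equals the operator-theoretic evaluation $p(T)$ in the matricial sense, so that Lemma \ref{lem:normbound} applies verbatim; this hinges on $p$ being a finite polynomial and on the compatibility of $\pi_T$ with the $M_q$-ampliations, which the preceding lemmas have set up precisely for this purpose. Everything else is bookkeeping with the already-established facts that $\cAK$ and its quotient are operator algebras and that point evaluations on $\cK$ are completely contractive.
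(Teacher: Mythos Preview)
Your proposal is correct and follows essentially the same route as the paper: apply BRS to the quotient $\cAK/\cIK$, compose with the quotient map to get a completely contractive representation $\pi_T$ with $T$ $\Lambda$-nilpotent, use complete isometry to identify $\|p+M_q(\cIK)\|$ with $\|p(T)\|$, bound the latter by $N$ via Lemma~\ref{lem:normbound}, and handle the reverse inequality by observing $(p+f)(X)=p(X)$ for $\Lambda$-nilpotent $X$. One small notational quibble: you use $q$ for both the matrix size in $M_q$ and for the quotient norm, which is worth disambiguating.
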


\begin{proof}
From Theorems \ref{thm:quotient-op-alg} and
{\ref{thm:BRS}} it follows that there exists
a Hilbert space $\cM$ and a completely isometric homomorphism
$\pi: \cAK / \cIK \, \rightarrow B(\cM)$. As before, identify
$M_q(\cAK / \cIK)$ with $M_q(\cAK) / M_q(\cIK)$. Let $\pi_q$ denote
the map $I_q \otimes \pi: M_q(\cAK) / M_q(\cIK) \rightarrow M_q \otimes B(\cM)$.
Let $R$ be the d-tuple $(R_1, R_2, ..., R_d)$, where
$R_j = \pi(g_j + \cIK) \in B(\cM)$, for $1 \le j \le d$.
Observe that $R$ is $\Lambda$-nilpotent.
Let $\eta: \cAK \rightarrow \cAK/\cIK$ be the quotient map. The
 composition map $\pi \circ \eta : \cAK \rightarrow B(\cM)$
is a completely contractive representation of $\cAK$. Also since
$\pi(\eta(g_j)) = R_j$, consistent with the notation introduced
earlier, we will use $\pi_R$ to denote the map
$\pi \circ \eta$.

 It follows from Theorem {\ref{thm:normattained}}
 that there exists $f \in M_q(\cIK)$
 such that
 \begin{equation}
 \label{eq:normattained}
 \|p + f\| = \|p + M_q(\cIK)\|.
 \end{equation}
 The fact that $\pi$ is completely isometric implies that
 \begin{equation}
 \label{eq:completeisom}
 \|p + M_q(\cIK)\| = \|\pi_q(p + M_q(\cIK)\| = \|p(R)\|
 \end{equation}
 Since $\pi_R$ is a completely contractive representation of $\cAK$, Lemma
 {\ref{lem:normbound}} implies that
 \begin{equation}
 \label{eq:bound}
 \|p(R)\| \leq \sup\{\|p(X)\|: X \in \cK, X \mbox{ is } \Lambda- \mbox{nilpotent}\}.
 \end{equation}
Combining the equations ({\ref{eq:normattained}}), ({\ref{eq:completeisom}})
and ({\ref{eq:bound}}), it follows that
\[
\|p + f\| \le  \sup\{\|p(X)\|: X \in \cK, X \mbox{ is } \Lambda- \mbox{nilpotent}\}.
\]
But the definition of $\|p +f\|$ implies that
\[
\|p + f\| \ge \sup\{\|p(X)\|: X \in \cK, X \mbox{ is } \Lambda- \mbox{nilpotent}\}
\]
and this completes the proof.
\end{proof}

\subsection{Operator Version of the CFP}
 \label{subsec:operator-version}
 As before,
 let $\Lambda \subset \cFd$ be a finite initial segment.
 Departing from the previous subsection, let  $\cU$
be a separable Hilbert space and let the polynomial
$p =  \sum_{w \in \Lambda} p_w w$,
where $\{p_w\}_{w \in \Lambda} \subset B(\cU)$
be given.

\begin{theorem}
\label{thm:operatorversion}
 There exists a formal power series
 $\tilde{x} =  \sum_{w \in \cFd} \tilde{x}_w w$ such that
 $\tilde{x}_w = p_w$ for all $w \in \Lambda$ and
 $\|\tilde{x}\| = \sup\{\|p(X)\|: X \in \cK, X \mbox{ is }
 \Lambda-\mbox{nilpotent}\}$.
\end{theorem}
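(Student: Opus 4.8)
The plan is to reduce the operator-valued statement to the matrix-valued Proposition~\ref{thm:matrixversion} by a standard finite-dimensional approximation argument, exactly paralleling the passage from matrix to operator coefficients advertised in the introduction of Section~\ref{sec:opversion}. First I would fix an increasing sequence of finite-rank projections $Q_N$ on $\cU$ converging strongly to the identity, and for each $N$ set $p^{(N)} = \sum_{w\in\Lambda} Q_N p_w Q_N\, w$, which is a polynomial with coefficients in $B(Q_N\cU)\cong M_{q_N}$ for $q_N=\operatorname{rank}Q_N$. Applying Proposition~\ref{thm:matrixversion} to each $p^{(N)}$ produces $f^{(N)}\in M_{q_N}(\cIK)$ with $\|p^{(N)}+f^{(N)}\| = \sup\{\|p^{(N)}(X)\| : X\in\cK,\ X\ \Lambda\text{-nilpotent}\}$. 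The candidate interpolant $\tilde x^{(N)} = p^{(N)} + f^{(N)}$, viewed as a formal power series with coefficients in $B(\cU)$ (padding by zeros off $Q_N\cU$), then has the correct initial segment up to the cutoff error $\|p_w - Q_Np_wQ_N\|\to 0$ and norm converging to the target supremum.

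The second step is to pass to a limit. Each $\tilde x^{(N)}$ has norm bounded by $M := \sup\{\|p(X)\| : X\in\cK,\ X\ \Lambda\text{-nilpotent}\} + o(1)$, so the sequence is bounded in the relevant power-series sense. Here I would invoke the weak-compactness machinery of Section~\ref{sec:weak}: by Lemma~\ref{lem:subsequence} applied at the operator level (the WOT/SOT version of Proposition~\ref{prop:bounded-pointwise}, which the paper says it needs at this generality), there is a subsequence along which every coefficient $(\tilde x^{(N_k)})_w$ converges WOT to some $\tilde x_w\in B(\cU)$, the resulting series $\tilde x = \sum_w \tilde x_w w$ lies in the appropriate class, and $\|\tilde x\| \le \liminf_k \|\tilde x^{(N_k)}\| \le M$. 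Because $(\tilde x^{(N_k)})_w \to p_w$ for $w\in\Lambda$ (the cutoff error vanishes), we get $\tilde x_w = p_w$ on $\Lambda$, establishing one inequality, namely $\|\tilde x\| \le M$.

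For the reverse inequality, the definition of the norm gives $\|\tilde x\| \ge \|\tilde x(X)\|$ for every $\Lambda$-nilpotent $X\in\cK$; since $\tilde x(X)$ depends only on the coefficients $\tilde x_w = p_w$ for $w\in\Lambda$, we have $\tilde x(X) = p(X)$, hence $\|\tilde x\| \ge M$. Combining, $\|\tilde x\| = M$, which is the claim.

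The main obstacle I anticipate is making the WOT/SOT limiting argument fully rigorous: Proposition~\ref{prop:bounded-pointwise} and Lemma~\ref{lem:subsequence} are stated for matrix coefficients $M_{p,q}$, so one must either cite their operator-coefficient analogues (the paper signals these exist) or run the diagonal extraction by hand over a countable dense set of coefficient-matrix entries, using that bounded balls of $B(\cU)$ are WOT-compact and metrizable on separable space. A secondary technical point is checking that the operator-norm convergence estimate $\|\sum_{|w|=j}(\tilde x^{(N_k)})_w\otimes X^w\| \le \rho^j M$ from Lemma~\ref{lem:A-bounded} survives the limit so that $\tilde x(X)$ genuinely converges and equals $\lim_k \tilde x^{(N_k)}(X)$ for each $X\in\cK$; this is routine but must be spelled out to conclude $\|\tilde x\| \le M$. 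Beyond these bookkeeping issues the argument is a direct assembly of Proposition~\ref{thm:matrixversion}, the compactness results of Section~\ref{sec:weak}, and the definition of the norm.
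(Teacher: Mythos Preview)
Your proposal is essentially the paper's own proof: compress $p$ to an increasing sequence of finite-dimensional subspaces, apply Proposition~\ref{thm:matrixversion} at each stage, lift the resulting matrix solutions back to $B(\cU)$, and extract a WOT-convergent subsequence of coefficients via a diagonal argument to obtain the limiting $\tilde x$. One small correction: the claim $\|p_w - Q_N p_w Q_N\|\to 0$ is generally false (compressions converge only SOT/WOT, not in norm), but your argument does not actually use norm convergence here---the later WOT limit $(\tilde x^{(N_k)})_w \to p_w$ is what you need and is correct.
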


\begin{proof}
Let $\{u_1, u_2, ...\}$ denote an orthonormal basis for the separable Hilbert
space $\cU$ and $\cU_m$ be the subspace of $\cU$ spanned by the vectors
$\{ u_j \}_{j=1}^m$. For notation ease, let
$C = \sup\{\|p(X)\|: X \in \cK, X \mbox{ is } \Lambda-\mbox{nilpotent}\}$.

For $w \in \Lambda$, define $M_m \ni (p_m)_w = V_m^*p_wV_m$
where $V_m: \cU_m \rightarrow
\cU$ is the inclusion map. Let $p_m$ denote the formal power series
\[
p_m = \displaystyle\sum_{w \in \Lambda} (p_m)_w w
\]
For each $X \in \cK$, Observe that $\|p_m(X)\| \leq \|p(X)\|$. Thus
$\|p_m\| \leq \|p\|$ and $p_m \in M_m(\cAK)$ for all $m \in \N$.
From Theorem {\ref{thm:matrixversion}}, there exists
$f_m \in M_m(\cIK)$ such that
$x_m = p_m + f_m \in M_m(\cAK)$ and
\[
\|x_m\| =
\sup\{\|p_m(X)\|: X \in \cK, X \mbox{ is } \Lambda-\mbox{nilpotent}\}.
\]

For $w \in \cFd$, define $B(\cU) \ni (\tilde{x}_m)_w = V_m (x_m)_w V_m^*$.
Let $\tilde{x}_m$ denote
the formal power series $\sum_{w \in \cFd} (\tilde{x}_m)_w w$.
For $X \in \cK$ and $j = 0, 1, 2,...$, it follows from From
Lemma {\ref {lem:A-bounded}} that there exists $0 \le \rho < 1$ such that

\begin{equation}
\label{eq:nthtermbounded}
 \begin{split}
 \| \sum_{|w| = j} (\tilde{x}_m)_w \otimes X^w \|
  & \le  \| \displaystyle \sum_{|w| = j} (x_m)_w \otimes X^w \| \\
  & \le \rho^j \|x_m\| \\
  &  \le C \rho^j
\end{split}
\end{equation}
This implies that series for $\tilde{x}_m(X)$ converges for each $X \in \cK$
 and moreover we have
 \begin{equation}
 \label{eq:bounded}
 \|\tilde{x}_m\| \le \|x_m\|  \le C.
 \end{equation}
Recall $\gamma$ and $S(\ell)$ from Subsection {\ref {subsec:genesis}}.
Let $u \in \cU$ be an arbitrary unit vector. For each $0 \leq j \le \ell$
and $X \in \cK$, it follows that

\begin{align*}
C^2 &\ge \| \displaystyle \sum_{|w| = j} (\tilde{x}_m)_w \otimes
S(\ell)^w (u \otimes \emptyset) \|^2 \\
    &\ge \| \displaystyle \sum_{|w| = j} (\tilde{x}_m)_wu \otimes
    w \|^2\\
    &\ge \sum_{|w|=j} \|(\tilde{x}_m)_w u\|^2
\end{align*}
Thus $\|(\tilde{x}_m)_w\| \le C$ for all $w \in \cFd$
and $m \in \N$.

Since $\cU$ is a separable Hilbert space and the sequence
$\{(\tilde{x}_m)_w\}_{m=1}^{\infty}$ is bounded (by C), for each
$w \in \cFd$, there exists a subsequence of
$\{(\tilde{x}_m)_w\}_{m=1}^{\infty}$ that
converges with respect to the WOT on $B(\cU)$. By a diagonal argument
similar to the one in Lemma {\ref{lem:subsequence}}, it follows that
there exists a subsequence $\{\tilde{x}_{m_k}\}$ of $\{\tilde{x}_m\}$
and $\{\tilde{x}_w\}_{w \in \cFd} \subset B(\cU)$ such that
for each $w \in \cFd$
\[
(\tilde{x}_{m_k})_w \rightarrow \tilde{x}_w
\]
with respect to the WOT on $B(\cU)$.

Let  $\tilde{x}$ denote the formal power series
$\sum_{w \in \cFd} \tilde{x}_w w$.
The proof of the Theorem is completed
by showing that $\| \tilde{x}\|\le C$
and that noting that
 $\tilde{x}_w = \lim (\tilde{x}_{m_k})_w = \lim V_{m_k}(p_{m_k})_wV_{m_k}^* = p_w$
(WOT limits) for $w\in\Lambda$.  The
details are omitted.
\end{proof}

\section{Examples and the case of infinite intial segments $\Lambda$}
\label{sec:further}
  Of course the results of this paper apply to the
  examples in Subsection \ref{subsec:examples}.

  In the case of the non-commutative matrix polydisc,
  the operators obtained by applying the representation
  of the quotient algebra to the generators $[g_j] = g_j + \cIK$;
  $1 \le j \le d$,
  are automatically contractions and thus certain technical
  details of the proof of Theorem \ref{thm:main}  are
  absent. Consequently, the argument easily extends
  to handle infinite intial segments, provided the
  underlying domain is expanded to include operators
  on separable Hilbert space.

 Fix a separable infinite dimensional Hilbert space $\cH$.
 Let $\cN^d$ denote the {\it operator non-commutative polydisc}
\[
\cN^d = \{(T_1, T_2, ..., T_d) \, :
 \, T_j \in B(\cH) \,\, \& \,\, \|T_j\| < 1 \}.
\]

The following variant of Theorem {\ref{thm:operatorversion}} holds.

\begin{theorem}
\label{thm:ncpolydisc}
Let $\cU$ be a separable Hilbert space,
 $\Lambda \subset \cFd$ be an infinite initial segment and
$p = \sum_{w \in \Lambda} p_w w$ be a formal
power series with coefficients $p_w \in B(\cU)$ such that
$\|p\| = \sup\{\|p(X)\| \, : \, X \in \cN^d \} < \infty.$
There exists operators $\tilde{x}_w \in B(\cU)$ and a formal
power series $\tilde{x} = \sum_{w \in \cFd} \tilde{x}_w w$
such that $\tilde{x}_w = p_w$ for all $w \in \Lambda$ and
$\|\tilde{x}\| = \sup \{\|p(T)\| \, : \, T \in \cN^d, \, T \mbox{ is }
\Lambda-\mbox{nilpotent}\}.$
\end{theorem}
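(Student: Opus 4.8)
The plan is to re-run the machinery of Sections~\ref{sec:opalgcak}--\ref{sec:opversion} with the matrix convex set $\cK$ replaced throughout by the operator polydisc $\cN^d$, and then to remove the finiteness hypothesis on $\Lambda$ by a truncation-plus-weak-compactness argument. First I would set up the algebra $\mathcal A(\cN^d)^\infty$ of formal power series $f=\sum_w f_w w$ for which $f(T)$ converges for every $T\in\cN^d$ and $\|f\|:=\sup\{\|f(T)\|:T\in\cN^d\}<\infty$, equipped with its matricial norms as in \eqref{def:normpq}. Since $\cN^d$ is open, bounded, circled, and closed under direct sums and compressions by isometries, and since every $T\in\cN^d$ satisfies $\sup\{s>0:sT\in\cN^d\}=1/\max_j\|T_j\|>1$, the slice-function estimate of Lemma~\ref{lem:A-bounded} (with $\gamma=1$), the product estimates of Lemma~\ref{lem:product}, the weak-compactness statements of Proposition~\ref{prop:bounded-pointwise} and Lemma~\ref{lem:subsequence}, the quotient discussion of Section~\ref{sec:quotient} (Theorem~\ref{thm:quotient-op-alg}, Proposition~\ref{thm:normattained}), and the representation lemmas of Section~\ref{sec:reps} transfer verbatim with $\cK$ replaced by $\cN^d$. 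The only step that is not a mere transcription is Proposition~\ref{prop:cutdown}, but for $\cN^d$ it is immediate and needs no appeal to Theorem~5.4 of \cite{EW}: if $\pi_T$ is a completely contractive representation of $\mathcal A(\cN^d)^\infty$ then $\|T_j\|=\|\pi_T(g_j)\|\le\|g_j\|=\sup\{\|X_j\|:X\in\cN^d\}=1$, so each compression $V^*T_jV$ is again a contraction and $tV^*TV\in\cN^d$ for $0\le t<1$. Granting this, Lemma~\ref{lem:normbound} and the matrix interpolation Proposition~\ref{thm:matrixversion} hold for $\mathcal A(\cN^d)^\infty$ and every \emph{finite} initial segment.

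Next I would reduce to matrix coefficients exactly as in the proof of Theorem~\ref{thm:operatorversion}: with $V_m:\cU_m\to\cU$ the inclusion of the span of the first $m$ basis vectors, replace $p$ by $p_m=\sum_{w\in\Lambda}V_m^*p_wV_m\,w$, solve the problem for each $p_m$, and recover a $B(\cU)$-valued solution via the WOT diagonal argument used there. So assume $p_w\in M_q$. For $N\in\N$ put $\Lambda_N=\Lambda\cap\Lambda(N)$, a finite initial segment, and $p^{(N)}=\sum_{w\in\Lambda_N}p_w w$, a polynomial. Since $T^w=0$ for $w\notin\Lambda_N$ whenever $T$ is $\Lambda_N$-nilpotent, one has $p^{(N)}(T)=p(T)$ for all $\Lambda_N$-nilpotent $T$; moreover every $\Lambda_N$-nilpotent tuple is $\Lambda$-nilpotent, so $C_N:=\sup\{\|p^{(N)}(X)\|:X\in\cN^d,\ X\ \Lambda_N\text{-nilpotent}\}$ satisfies $C_N\le C$ and is nondecreasing in $N$. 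Applying the finite-segment matrix interpolation theorem from the first paragraph to $p^{(N)}$ yields $\tilde x^{(N)}\in M_q(\mathcal A(\cN^d)^\infty)$ with $(\tilde x^{(N)})_w=p_w$ for $w\in\Lambda_N$ and $\|\tilde x^{(N)}\|=C_N\le C$.

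Then I would pass to the limit. Since $(\tilde x^{(N)})$ is bounded by $C$ in $M_q(\mathcal A(\cN^d)^\infty)$, the $\cN^d$-version of Lemma~\ref{lem:subsequence} gives $\|(\tilde x^{(N)})_w\|\le C$ for all $w$, and a diagonal argument over $\cFd$ produces a subsequence $N_k$ and matrices $\tilde x_w\in M_q$ with $(\tilde x^{(N_k)})_w\to\tilde x_w$ for all $w$. Set $\tilde x=\sum_w\tilde x_w w$. By the $\cN^d$-version of Proposition~\ref{prop:bounded-pointwise}, $\tilde x^{(N_k)}(T)\to\tilde x(T)$ for every $T\in\cN^d$ and $\|\tilde x(T)\|\le\limsup_k\|\tilde x^{(N_k)}\|\le C$, so $\tilde x\in M_q(\mathcal A(\cN^d)^\infty)$ with $\|\tilde x\|\le C$. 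For $w\in\Lambda$ we have $(\tilde x^{(N_k)})_w=p_w$ once $N_k\ge|w|$, hence $\tilde x_w=p_w$; and for any $\Lambda$-nilpotent $T\in\cN^d$, $\tilde x(T)=\sum_{w\in\Lambda}\tilde x_w T^w=\sum_{w\in\Lambda}p_w T^w=p(T)$, so $\|\tilde x\|\ge C$. Thus $\|\tilde x\|=C$, and undoing the reduction to matrix coefficients completes the proof.

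The main obstacle is the content of the first paragraph: one must verify that the whole operator-algebra apparatus — that $\mathcal A(\cN^d)^\infty$ is an abstract unital operator algebra, that it has the bounded-pointwise weak compactness property, and that norms of classes in the relevant quotients are attained — survives the replacement of a matrix convex set by the operator domain $\cN^d$. None of these verifications is difficult, but it is important to carry them out over $\cN^d$ \emph{itself} rather than over the matrix polydisc, so that each truncated solution $\tilde x^{(N)}$ emerges as a genuine element of $\mathcal A(\cN^d)^\infty$, bounded on the operator polydisc and not merely on finite-dimensional tuples; that is precisely the control driving the limit in the third paragraph. The compensating simplification — and the reason an infinite $\Lambda$ is manageable here — is that the boundary step Proposition~\ref{prop:cutdown} becomes trivial for $\cN^d$, because the generators map to honest contractions under any completely contractive representation.
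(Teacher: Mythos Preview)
Your proposal is correct. The paper does not actually prove Theorem~\ref{thm:ncpolydisc}; it offers only the one-paragraph remark preceding the statement, whose content is precisely the observation you isolate: for the polydisc the images $\pi([g_j])$ of the generators under any completely contractive representation are automatically contractions, so the boundary step (Proposition~\ref{prop:cutdown}) becomes trivial and the rest of the machinery goes through. Your write-up supplies the missing details faithfully.

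There is one mild difference of emphasis worth noting. The paper's phrase ``the argument easily extends to handle infinite initial segments, provided the underlying domain is expanded to include operators on separable Hilbert space'' most naturally suggests running the BRS/quotient argument \emph{directly} for the infinite $\Lambda$: in the analogue of Lemma~\ref{lem:normbound} the invariant subspace $\mathrm{span}\{T^w h_j:w\in\Lambda\}$ may now be infinite dimensional, but the compressed tuple is still a tuple of contractions and hence (after a unitary identification) lies in $\cN^d$, which is exactly why the domain must be enlarged from matrices to operators. You instead reach the infinite-$\Lambda$ conclusion by truncating to the finite segments $\Lambda_N=\Lambda\cap\Lambda(N)$, applying the finite-segment result over $\cN^d$, and passing to a limit via the weak-compactness Lemma~\ref{lem:subsequence}. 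Both routes are valid; yours is perhaps more elementary in that it never needs to revisit Lemma~\ref{lem:normbound} for infinite $\Lambda$, at the cost of an extra diagonal-subsequence step. Your caveat that the finite-segment interpolation must be carried out over $\cN^d$ itself (so that each $\tilde{x}^{(N)}$ is bounded on the operator polydisc, not merely on matrix tuples) is exactly right and is the point that makes the limiting argument go through.
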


Similarly, consider the $dd'$-dimensional
{\it operator non-commutative mixed ball},
\[
\cD^{dd'} = \{ T = (T_{11}, T_{12},...,T_{dd'}) \, : \, T_{ij} \in B(\cH)
\,\, \& \,\, \|T\|_{op} <1\}
\]
where $\|T\|_{op}$ is the norm of the operator
$(T_{ij})_{i,j=1}^{d,d'} : B(\cH^{d'}) \rightarrow B(\cH^d)$.

A variant of Theorem {\ref{thm:operatorversion}} holds in this case
as well, the statement of which can be obtained by replacing
$\cN^d$ in the statement of Theorem {\ref{thm:ncpolydisc}} by
$\cD^{dd'}$.

\section*{Acknowledgements}
I would like to thank my advisor Scott McCullough
for his guidance in the preparation of this article.

\end{document}